\documentclass[11pt,oneside, reqno]{amsart}

\usepackage[colorlinks,linkcolor=blue,citecolor=blue]{hyperref}
\usepackage{graphics}
\usepackage{enumitem}
\usepackage{marginnote}	
\usepackage{latexsym, amssymb, amsmath, amsthm}
\usepackage{mathrsfs}
\usepackage{tikz}
\usepackage{tikz-cd}
\usepackage[only,llbracket,rrbracket]{stmaryrd}
\usetikzlibrary{decorations.pathmorphing}

\tikzset{
	squiggly/.style={decorate, decoration={snake, amplitude=1mm, segment length=6mm}},
}

\newtheorem{theorem}{Theorem}[section]
\newtheorem{lemma}[theorem]{Lemma}
\newtheorem{proposition}[theorem]{Proposition}
\newtheorem{corollary}[theorem]{Corollary}

\theoremstyle{definition}\newtheorem{definition}[theorem]{Definition}
\theoremstyle{definition}
\theoremstyle{definition}\newtheorem{remark}[theorem]{Remark}

\def\ga{\mathfrak{a}}
\def\gb{\mathfrak{b}}

\def\gp{\mathfrak{p}}

\def \sl{\mathfrak{sl}}

\def \CB{\mathcal{B}}

\def \CD{\mathcal{D}}

\def \CO{\mathcal{O}}

\def \CS{\mathcal{S}}

\def \CY{\mathcal{Y}}

\def \Prim{{\rm Prim}}
\def\Max{{\rm Max}}

\def \Aut{{\rm Aut}}

\def \Spec{{\rm Spec}}
\def \ann{{\rm ann}}
\def \Frac{{\rm Frac}}

\def \char{\operatorname{char}}

\def \mK{\Bbbk}
\def \Z{\mathbb{Z}}
\def \N{\mathbb{N}}

\def \D{\Delta}
\def \d{\delta}
\def \s{\sigma }
\def \l{\lambda}

\begin{document}

	\author{Tao Lu}
	
	\subjclass[2010]{16T05, 16D60, 16P40}
	
	\keywords{Jordan plane, Hopf algebra, prime ideal, primitive ideal,  automorphism group}
	
	\address{School of Mathematical Science, Yangzhou University, Yangzhou, 225002, China}
	
	\email{taolu@yzu.edu.cn}
	
	\begin{abstract} 
We study a family of Hopf algebras arising as liftings of the Jordan plane over the infinite cyclic group. We determine their centres, prime and primitive spectra, and automorphism groups. We show that every prime ideal is completely prime and that every nonzero ideal intersects the centre nontrivially. We construct explicit simple modules corresponding to all primitive ideals and classify the finite-dimensional simple modules. Finally, we prove that these Hopf algebras satisfy the Dixmier--Moeglin equivalence.
	\end{abstract}
	
	\title{On a family of liftings of the Jordan plane}
	\maketitle
	

\section{Introduction}

 Throughout the paper, all modules are left modules. The base field $\mK$ is assumed to be algebraically closed of characteristic zero. We write $\mK^*=\mK\setminus{0}$ and $\mathbb{N}=\{0,1,2,\dots\}$.

The lifting method of Andruskiewitsch and Schneider \cite{Andruskiewitsch-Schneider-1,Andruskiewitsch-Schneider-2} is a fundamental tool in the classification of pointed Hopf algebras. Given the bosonization of a Nichols algebra, one seeks all Hopf algebras whose associated graded Hopf algebra with respect to the coradical filtration coincides with that bosonization. Such Hopf algebras are called \emph{liftings}, and can be viewed as deformations of the algebra structure that preserve the underlying coalgebra structure. This method has led to many classification results for pointed Hopf algebras;  see, for example, \cite{Andruskiewitsch-Angiono-Garcia Iglesias,Andruskiewitsch-Schneider-3,Angiono-Garcia Iglesias} and the references therein. 

The Jordan plane is the quadratic algebra $\mathtt{J}=\Bbbk\langle x,y \mid yx-xy=-\tfrac12 x^2\rangle$.
It is a well-known Artin--Schelter regular algebra of global dimension two \cite{Artin-Shelter}, and one of the fundamental examples of a Nichols algebra of non-diagonal type with finite Gelfand--Kirillov dimension \cite{Andruskiewitsch-Angiono-Heckenberger}. Its algebraic and homological properties have been extensively studied; see \cite{Artin-Shelter,Iyudu,Shirikov}. Liftings of the Jordan plane and the super Jordan plane over nilpotent-by-finite groups were computed in \cite{Andruskiewitsch-Angiono-Heckenberger-1,Andruskiewitsch-Angiono-Heckenberger-2}.

In this paper, we study a family of Hopf algebras $\mathfrak{U}(\lambda)$ arising as liftings of the Jordan plane over the group algebra $\Bbbk G$, where $G$ is the infinite cyclic group. According to \cite[Proposition 1]{Andruskiewitsch-Angiono-Heckenberger-2}, the liftings of the Jordan plane over $\Bbbk G$ fall into three families. The first family consists of the Hopf algebras $\mathfrak{U}(\lambda)$ considered here, while the second family is defined in \cite[Definition 9]{Andruskiewitsch-Angiono-Heckenberger-2}. The third family is the Jordanian lifting $\mathfrak{U}^{\mathtt{jordan}}$, which is closely related to the Jordanian quantum algebra $U_h(\mathfrak{sl}_2)$ introduced by Ohn \cite{Ohn}. It was shown in \cite{Lu} that $\mathfrak{U}^{\mathtt{jordan}}$ is isomorphic to a localization of $U(\mathfrak{sl}_2)$, and its prime spectrum and finite-dimensional simple modules have been determined.

Our aim is to investigate the structure and representation theory of the Hopf algebras $\mathfrak{U}(\lambda)$. We prove that the centre of $\mathfrak{U}(\lambda)$ is the polynomial algebra $\Bbbk[\Omega]$. We determine the prime, primitive, maximal, and completely prime spectra, classify the finite-dimensional simple modules, and compute the automorphism groups. We also show that $\mathfrak{U}(\lambda)$ satisfies the Dixmier--Moeglin equivalence for every $\lambda\in\Bbbk$. The cases $\lambda=0$ and $\lambda\neq0$ exhibit substantially different behavior and are treated separately.

We begin with $\mathfrak{U}(0)$, the bosonization of the Jordan plane. We give an explicit description of its prime and primitive spectra, showing that every primitive ideal is maximal and every prime ideal is completely prime. We further prove that every nonzero ideal intersects the centre nontrivially. As consequences, we classify the finite-dimensional simple modules and reduce the classification of infinite-dimensional simple modules to that of a localization of the first Weyl algebra. Finally, we determine the automorphism group of $\mathfrak{U}(0)$ using the structure of the centre, the height-one prime ideals, and the centralizer of the element $g$.

We then turn to the non-Jordanian liftings $\mathfrak{U}(\lambda)$ with $\lambda\neq0$. A crucial step in this setting is the analysis of the central quotients $\mathfrak{U}^{\alpha}=\mathfrak{U}(\l)/\langle \Omega-\alpha \rangle$ for $\alpha \in \mK$. We prove that for generic values $\alpha\neq \pm 4\lambda$, the quotient $\mathfrak{U}^{\alpha}$ is a simple domain, whereas the exceptional quotients $\mathfrak{U}^{\pm 4\lambda}$ are neither simple nor domains. This dichotomy governs much of the ideal-theoretic and representation-theoretic structure of $\mathfrak{U}(\lambda)$. Using these results, we determine the prime, primitive, maximal, and completely prime spectra. As in the case $\lambda=0$, every prime ideal of $\mathfrak{U}(\lambda)$ is completely prime, and every nonzero ideal intersects the centre nontrivially. However, unlike the case $\lambda=0$, not all primitive ideals are maximal. We also construct explicit simple modules whose annihilators realize all primitive ideals. Furthermore, we determine the automorphism group of $\mathfrak{U}(\lambda)$ and show that it is smaller than that of $\mathfrak{U}(0)$, reflecting the increased rigidity of the nontrivial liftings. Finally, we prove that $\mathfrak{U}(\lambda)$ satisfies the Dixmier--Moeglin equivalence.

The paper is organized as follows. In Section \ref{Preliminaries}, we introduce the family of Hopf algebras $\mathfrak{U}(\l)$, construct a central element, and determine the centralizer of $g$. Section \ref{AlgJ} is devoted to the bosonization of the Jordan plane (the case $\mathfrak{U}(0)\cong J$), where we determine its centre, describe its prime and primitive spectra, and compute its automorphism group. In Section \ref{AlgU}, we study the non-Jordanian lifting $\mathfrak{U}(\l)$ with $\l \neq 0$. We determine its centre, describe its prime and primitive spectra, construct simple modules, and compute its automorphism group.

\section{Preliminaries} \label{Preliminaries}  

In this section, we introduce the family of Hopf algebras $\mathfrak{U}(\lambda)$ and establish some of their basic  properties. In particular, we construct a central element and determine the centralizer of the grouplike element $g$.

\subsection{The Hopf algebras $\mathfrak{U}(\l)$}
Let $G=\langle g \rangle$ be the infinite cyclic group generated by $g$, and let $V=\mK\{x, y \}\in {}_{\mK G}^{\mK G}\!\mathcal{YD}$ be the Yetter--Drinfeld module defined by 
\begin{equation*}
g\cdot x=x, \quad g\cdot y=y+x, \quad \d(x)=g\otimes x, \quad \d(y)=g\otimes y. 
\end{equation*}
By \cite[Proposition 3.4]{Andruskiewitsch-Angiono-Heckenberger}, the Nichols algebra $\CB(V)$ associated to $V$ turns out to be the Jordan plane. It is generated by $x$ and $y$ subject to the relation $yx-xy+\frac{1}{2}x^2=0$. Moreover, $\CB(V)$ has Gelfand--Kirillov dimension 2, and  $\{ x^i y^j \mid i,j\in \N \}$ forms a basis of $\CB(V)$. 
Since $\CB(V)$ is a Hopf algebra in the category ${}_{\mK G}^{\mK G}\!\CY\CD$, its Radford--Majid bosonization $J:=\CB(V)\#\mK G$ is a Hopf algebra in the usual category of $\mK$-vector spaces.
\begin{definition}
	The bosonization $J=\CB(V) \#\mK G$ of the Jordan plane is the associative $\mK$-algebra generated by $g^{\pm 1}$, $x$, and $y$ with defining relations
	\begin{equation*}
	\begin{gathered}
	gg^{-1}=g^{-1}g=1, \quad gxg^{-1}=x, \quad gyg^{-1}=y+x,\\
	yx-xy=-\tfrac{1}{2}x^2. 
	\end{gathered}	
	\end{equation*}
\end{definition}

The Hopf algebra $J$ has Gelfand--Kirillov dimension 3, and $\{ g^ix^j y^k \mid i\in \Z, j,k\in \N \}$ forms a basis of $J$. 
The main object of this paper is a family of liftings of $J$ obtained in \cite{Andruskiewitsch-Angiono-Heckenberger-1,Andruskiewitsch-Angiono-Heckenberger-2}.

\begin{definition} 
	Let $\l \in \mK$. The algebra $\mathfrak U(\lambda)$ is generated by $g^{\pm 1}$, $x$, and $y$ subject to the relations
	\begin{equation*}
	\begin{gathered}
	gg^{-1}=g^{-1}g=1, \quad gxg^{-1}=x, \quad gyg^{-1}=y+x,\\
	yx-xy=-\tfrac{1}{2}x^2+\l(1-g^2). 
	\end{gathered}	
	\end{equation*}
	Moreover, $\mathfrak{U}(\l)$ is a Hopf algebra with comultiplication, counit, and antipode given by 
	\begin{equation*}
	\begin{gathered}
	\D(g^{\pm 1})=g^{\pm 1} \otimes g^{\pm 1}, \quad \D(x)=x\otimes 1+ g\otimes x, \quad \D(y)=y\otimes 1+ g\otimes y,\\
	\varepsilon(g)=1, \quad \varepsilon(x)=0, \quad \varepsilon(y)=0,\\
	S(g)=g^{-1}, \quad S(x)=-g^{-1}x, \quad S(y)=-g^{-1}y. 
	\end{gathered}
	\end{equation*}
\end{definition}

In particular, $\mathfrak{U}(0) \cong J$. By \cite[Proposition 4.2]{Andruskiewitsch-Angiono-Heckenberger-1}, $\mathfrak{U}(\l)$ is a pointed Hopf algebra, and a cocycle deformation of its associated graded Hopf algebra $\operatorname{gr} \mathfrak{U}(\l) \cong J$. Moreover, $\mathfrak{U}(\l) \cong \mathfrak{U}(\l')$ if and only if there exists $c\in \mK^*$ such that $\l=c\l'$.

\subsection{A central element and the centralizer of $g$}
For every $\l \in \mK$, the algebra $\mathfrak{U}(\l)$ can be presented as an Ore extension
\begin{equation} \label{OreJ} 
\mathfrak{U}(\l)=\mK[g^{\pm 1},x][y;\d]
\end{equation}
where $\d$ is the derivation of $\mK[g^{\pm 1},x]$ determined by $\d(g)=-xg$ and $\d(x)=-\tfrac{1}{2}x^2+\l(1-g^2)$. It follows that $\mathfrak{U}(\l)$ is a Noetherian domain of Gelfand--Kirillov dimension 3. 

Define
\begin{equation} \label{Omega}  
\Omega:=x^2g^{-1}-2\l(g^{-1}+g). 
\end{equation}
Clearly, $\Omega$ commutes with both $g$ and $x$. Moreover, using the identities $[y,g]=-xg$, $[y,g^{-1}]=xg^{-1}$, and $[y,x^2]=-x^3+2\l(1-g^2)x$, we obtain
\begin{equation*}
\begin{aligned}\
[y, \Omega]&=[y,x^2]g^{-1}+x^2[y,g^{-1}]-2\l[y, g^{-1}+g]\\
&=\big(-x^3+2\l(1-g^2)x\big)g^{-1}+x^3g^{-1}-2\l(xg^{-1}-xg)\\
&=2\l(g^{-1}-g)x-2\l(xg^{-1}-xg)\\
&=0.
\end{aligned}
\end{equation*}
Hence $\Omega$ commutes with $y$, and therefore $\Omega$ is a central element of $\mathfrak{U}(\lambda)$.

Recall that for an algebra $R$ and $a\in R$, the centralizer of $a$ in $R$ is defined as $C_R(a):=\{ r\in R \mid ar=ra \}$. We next determine the centralizer of the element $g$ in $\mathfrak{U}(\l)$. 
\begin{lemma}  \label{a10Jun26}  
	For any $\l \in \mK$, one has $C_{\mathfrak{U}(\l)}(g)=\mK[g^{\pm 1}, x]$. 
\end{lemma}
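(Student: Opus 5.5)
The inclusion $\mK[g^{\pm 1},x]\subseteq C_{\mathfrak{U}(\l)}(g)$ is immediate, since $\mK[g^{\pm1},x]$ is commutative. For the reverse inclusion we use the Ore extension presentation \eqref{OreJ}. Every element $r\in\mathfrak{U}(\l)$ can be written uniquely as
\[
r=\sum_{k=0}^n a_k y^k, \qquad a_k\in A:=\mK[g^{\pm1},x],
\]
and we suppose $gr=rg$ with $n\ge 1$ and $a_n\neq 0$, aiming for a contradiction.

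To get one, we compute $rg$ and compare leading coefficients in $y$. From $yg=gy+\d(g)=gy-xg$ we get $yg=g(y-x)$. Since $g$ commutes with $x$, induction gives
\[
y^k g=g\,(y-x)^k .
\]
Now $(y-x)^k=y^k-kxy^{k-1}+(\text{terms of $y$-degree}\le k-2)$. Indeed, commuting $x$ past $y$ produces only terms of lower $y$-degree, because $\d(A)\subseteq A$. Therefore
\[
rg-gr=\sum_k a_k g\big((y-x)^k-y^k\big),
\]
and the coefficient of $y^{n-1}$ in this expression is $-n\,a_n g x$ plus the contributions of lower $y$-degree coming from $a_{n-1}$, which cancel between $rg$ and $gr$. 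More carefully, the coefficient of $y^{n-1}$ in $rg$ is $a_{n-1}g-na_ngx$, while in $gr$ it is $ga_{n-1}$. Setting $rg=gr$ thus forces $n\,a_n g x=0$.

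Since $\char\mK=0$ and $A$ is a domain, $na_ngx=0$ gives $a_n=0$, a contradiction. Hence $n=0$ and $r\in A$. The only step needing care is the bookkeeping that the $y^{n-1}$-coefficient of $(y-x)^n$ is exactly $-nx$, i.e.\ the normal-ordering corrections land in degree $\le n-2$. We check this by induction on $k$ using $y a=ay+\d(a)$ for $a\in A$; this is routine. No other obstacle is expected, and the argument is uniform in $\l$.
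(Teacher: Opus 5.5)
Your proof is correct and is essentially the paper's argument. The paper conjugates, writing $gag^{-1}-a=\sum_i r_i\big((y+x)^i-y^i\big)$ and reading off the leading term $n r_n x y^{n-1}$; you instead use the equivalent relation $yg=g(y-x)$ to write $rg-gr=\sum_k a_k g\big((y-x)^k-y^k\big)$ and read off the $y^{n-1}$-coefficient $-n a_n g x$, and your explicit use of characteristic zero and of the degree bookkeeping in the Ore extension spells out what the paper leaves implicit.
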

\begin{proof}
	The inclusion $\mK[g^{\pm1},x]\subseteq C_{\mathfrak{U}(\lambda)}(g)$ is immediate. Conversely, 
	let $a=\sum_{i=0}^n r_i y^i\in C_{\mathfrak{U}(\l)}(g)$ where $r_i \in \mK[g^{\pm 1},x]$ and $r_n \neq 0$. Using $gyg^{-1}=y+x$, we obtain
	\begin{equation*}
	0=gag^{-1}-a=\sum_{i=0}^n r_i \big((y+x)^i-y^i\big).
	\end{equation*}
	For each $i\geq 1$, the element $(y+x)^i-y^i$ has $y$-degree $i-1$ with leading term $ixy^{i-1}$. Consequently, the right-hand side has $y$-degree $n-1$ and leading term  $nr_n xy^{n-1}$. Since $\mathfrak{U}(\lambda)$ is a domain and $r_n\neq0$, this term is nonzero whenever $n>0$. Therefore $n=0$, and hence $a=r_0\in \mK[g^{\pm 1}, x]$. This proves the reverse inclusion $C_{\mathfrak{U}(\l)}(g) \subseteq \mK[g^{\pm 1},x]$, and the result follows. 
\end{proof}

\section{Prime ideals, representations, and automorphisms of $J$} \label{AlgJ} 
In this section, we study the bosonization $J\cong \mathfrak{U}(0)$ of the Jordan plane. We determine its centre, describe its prime and primitive spectra, and compute its automorphism group.

\subsection{The centre of $J$}
The centre of a ring $R$ is denoted by $Z(R)$. Recall that an element $a\in R$ is said to be \emph{normal} if $aR=Ra$. Since $xg=gx$ and $xy=(y+\tfrac{1}{2}x)x$, 
the element $x$ is normal in $J$. When $\lambda=0$, the central element defined in \eqref{Omega} reduces to $\Omega=x^2g^{-1}\in Z(J)$. 

\begin{lemma}
	The centre of $J$ is the polynomial algebra $Z(J)=\mK[\Omega]$. 
\end{lemma}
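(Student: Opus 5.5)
Since every central element commutes with $g$, Lemma \ref{a10Jun26} gives $Z(J)\subseteq C_J(g)=\mK[g^{\pm1},x]$. The problem therefore reduces to finding which Laurent polynomials $f=\sum_{i,j} c_{ij}g^i x^j$ (with $i\in\Z$, $j\in\N$) commute with $y$. Such an $f$ already commutes with $g$ and $x$, so this single condition is the whole question. The inclusion $\mK[\Omega]\subseteq Z(J)$ has already been established, since $\Omega=x^2g^{-1}$ is central.

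Next I compute the action of $\d=[y,-]$ on monomials, using the derivation from \eqref{OreJ} with $\l=0$: $\d(g)=-xg$ and $\d(x)=-\tfrac12 x^2$. Because $\mK[g^{\pm1},x]$ is commutative, $\d$ is an honest derivation there, and
\[
\d(g^ix^j)=i g^{i-1}(-xg)x^j + g^i\cdot j x^{j-1}\bigl(-\tfrac12 x^2\bigr)=-\bigl(i+\tfrac{j}{2}\bigr)g^ix^{j+1}.
\]
So $\d$ sends each monomial $g^ix^j$ to a scalar multiple of the distinct monomial $g^ix^{j+1}$, and distinct monomials go to distinct monomials. Since $\{g^ix^j\}$ is a basis of $\mK[g^{\pm1},x]$, the condition $\d(f)=0$ forces $c_{ij}(2i+j)=0$ for every pair $(i,j)$.

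Hence $c_{ij}$ can be nonzero only when $j=-2i$. Since $j\ge 0$, this means $i=-k$ and $j=2k$ for some $k\in\N$, so $f$ is a linear combination of the monomials $g^{-k}x^{2k}=\Omega^k$. This gives $f\in\mK[\Omega]$.

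It remains to check that $\Omega$ is algebraically independent over $\mK$. The powers $\Omega^k=g^{-k}x^{2k}$ are distinct basis monomials of $\mK[g^{\pm1},x]$, so no nontrivial polynomial in $\Omega$ vanishes, and $\mK[\Omega]$ is a polynomial algebra.

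None of these steps is a real obstacle. The only point needing care is confirming that $\d$ acts diagonally up to the shift $x^j\mapsto x^{j+1}$, so that the coefficients can be compared monomial by monomial; this follows from the computation above.
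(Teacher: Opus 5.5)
Your proof is correct, and it takes a different route from the paper.

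The paper localizes at the normal element $x$ and sets $y'=yx^{-1}$. It identifies $J_x\cong\mK[\Omega^{\pm1}]\otimes\mK[x^{\pm1}][y';\delta]$ with $\delta(x)=-\tfrac12x$. It then uses that this Ore extension (a localized Weyl algebra) is simple with centre $\mK$ to get $Z(J_x)=\mK[\Omega^{\pm1}]$, and contracts: $Z(J)=J\cap\mK[\Omega^{\pm1}]=\mK[\Omega]$.

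You instead use Lemma~\ref{a10Jun26} to confine $Z(J)$ to $\mK[g^{\pm1},x]$. You then exploit that for $\lambda=0$ the derivation $[y,-]$ is homogeneous on the monomial basis, $g^ix^j\mapsto-(i+\tfrac{j}{2})g^ix^{j+1}$. In characteristic zero its kernel is therefore spanned by the weight-zero monomials $g^{-k}x^{2k}=\Omega^k$.

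What your route buys:
\begin{itemize}
\item It is more elementary and self-contained. It avoids the tensor decomposition of $J_x$, the centre of the localized Weyl algebra and of a tensor product, and the contraction step.
\item It gives the algebraic independence of $\Omega$ for free.
\item It adapts to $\lambda\neq0$ by a leading-term induction on $x$-degree. The extra term $\lambda(1-g^2)$ in $\delta(x)$ contributes only in lower $x$-degree, and $\Omega$ still has leading term $x^2g^{-1}$.
\end{itemize}

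What the paper's route buys: it produces the decomposition \eqref{J0x} of $J_x$ as a by-product. That decomposition is reused immediately in Theorem~\ref{a26May26} to determine the primes not containing $x$ and to show that the quotients $J^\alpha$ are simple.
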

\begin{proof}
	Let $J_x$ denote the localization of $J$ at the powers of the normal element $x$. and set $y':=yx^{-1}$. A straightforward computation yields
	\begin{equation*}
	[y',g]=-g, \qquad [y', x]=-\tfrac{1}{2}x.
	\end{equation*}	
	Since $\Omega=x^2g^{-1}$ is central and invertible in $J_x$, we may write $g^{\mp 1}=(\Omega x^{-2})^{\pm 1}$. It follows that $J_x$ is generated by $\Omega^{\pm 1}$, $x^{\pm 1}$, and $y'$. Thus
	\begin{equation} \label{J0x} 
	J_x \cong \mK[\Omega^{\pm 1}] \otimes \mK[x^{\pm 1}][y';\d],
	\end{equation}
	where $\d$ is the derivation of $\mK[x^{\pm 1}]$ determined by $\d(x)=-\tfrac{1}{2}x$. Since $\char \mK=0$, the Ore extension $\mK[x^{\pm1}][y';\delta]$ is simple with centre $\mK$. It follows that $Z(J_x)=\mK[\Omega^{\pm 1}]$. Consequently, we have $Z(J)=J\cap Z(J_x)=J \cap \mK[\Omega^{\pm 1}]=\mK[\Omega]$, which completes the proof.
\end{proof}

\subsection{Prime and primitive ideals of $J$}
Recall that a proper ideal $\gp$ of a ring $R$ is \emph{prime} if,  for all ideals $\ga$ and $\gb$ of $R$, the inclusion $\ga \gb \subseteq \gp$ implies $\ga \subseteq \gp$ or $\gb \subseteq \gp$. The set of prime ideals of $R$ is called the prime spectrum of $R$, and is denoted by $\Spec(R)$. 
A prime ideal $\gp$ is \emph{completely prime} if $R/\gp$ is a domain. For an $R$-module $M$, the annihilator of $M$ is the ideal of $R$ defined by
\begin{equation*}
\ann_R(M):=\{ r\in R \mid rm=0 \,\,\text{for all } m\in M \}. 
\end{equation*}
An ideal $\gp$ of a ring $R$ is \emph{primitive} if it is the annihilator of some simple $R$-module. The set of primitive ideals of $R$ is called the primitive spectrum of $R$, and is denoted by $\Prim(R)$.  It is well known that every primitive ideal is prime and every maximal ideal is primitive. For a subset $\CS \subset R$, we write $\langle \CS\rangle$ for the two-sided ideal generated by $\CS$. 

\begin{theorem} \label{a26May26}  
\begin{enumerate}
	\item The prime spectrum of $J$ is 
	\begin{equation*}
	\Spec(J)=\{ \langle 0 \rangle \}\,\,\cup \,\,\{ \langle \Omega-\alpha \rangle \mid \alpha \in \mK^* \}\,\,\cup \,\,\{ \langle x, \gp \rangle \mid \gp \in \Spec(\mK[g^{\pm 1},y]) \}.
	\end{equation*}
	The inclusions among these prime ideals are depicted in the following diagram:
	\[
	\begin{tikzpicture}[scale=0.5]
	
	\node (zero) at (0,0) {$\langle 0\rangle$};
	
	\node (x) at (-4,2)
	{$\langle x\rangle$};
	
	\node (omega) at (4,2)
	{\qquad $\{\langle \Omega-\alpha \rangle \mid \alpha \in\mK^*\}$};
	
	\node (xp) at (-4,4)
	{$\{\langle x, \gp\rangle \mid \gp\in\operatorname{Spec}(\mK[g^{\pm1},y]),\,
		\operatorname{ht}(\gp)=1\}$};
	
	\node (max) at (-4,6)
	{$\{\langle x,g-\beta,y-\gamma\rangle
		\mid \beta\in\mK^*,\ \gamma\in\mK\}$};
	
	\draw (zero) -- (x);
	\draw (zero) -- (omega);
	
	\draw (x) -- (xp);
	
	\draw[dotted] (xp) -- (max);
	
	\end{tikzpicture}
	\]
	
	\item 	The primitive spectrum of $J$ coincides with its maximal spectrum and is given by
	\begin{equation*}
	\Prim(J)=\Max(J)=\{ \langle \Omega-\alpha \rangle \mid \alpha \in \mK^* \} \,\, \cup \,\, \{ \langle x, g-\beta, y-\gamma \rangle \mid \beta \in \mK^*, \gamma \in \mK \}. 
	\end{equation*}
	
	\item Every prime ideal of $J$ is completely prime. 
	
	\item Every nonzero ideal of $J$ intersects the centre $Z(J)$ nontrivially.
	
\end{enumerate}	
\end{theorem}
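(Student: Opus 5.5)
The strategy is to split $\Spec(J)$ according to whether a prime contains the normal element $x$. This uses the localization $J_x\cong \mK[\Omega^{\pm1}]\otimes \mK[x^{\pm1}][y';\d]$ from the proof that $Z(J)=\mK[\Omega]$.

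\emph{Primes containing $x$.} Since $x$ is normal, $\langle x\rangle = xJ$. From the relations, $J/\langle x\rangle$ is generated by $g^{\pm1}$ and $y$. In the quotient, $gyg^{-1}=y$ and the relation $yx-xy=-\tfrac12x^2$ becomes vacuous, so $J/\langle x\rangle\cong \mK[g^{\pm1},y]$. This is a commutative domain, so $\langle x\rangle$ is completely prime. Moreover, primes containing $x$ correspond bijectively to $\Spec(\mK[g^{\pm1},y])$, all of them completely prime. This gives the chain $\langle x\rangle\subset\langle x,\gp\rangle\subset\langle x,g-\beta,y-\gamma\rangle$ in the diagram.

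\emph{Primes not containing $x$.} These correspond to primes of $J_x$ via $P\mapsto P_x$ and $Q\mapsto Q\cap J$, by standard localization at the Ore set of powers of a normal element in a Noetherian ring. The factor $A=\mK[x^{\pm1}][y';\d]$ is simple with centre $\mK$. I expect this to be the main technical point: I need that every ideal of the tensor product $\mK[\Omega^{\pm1}]\otimes A$ is generated by its intersection with $\mK[\Omega^{\pm1}]$. To prove it, take an ideal $I$ and a nonzero element $\sum_i c_i\otimes a_i$ of $I$ with the $c_i$ linearly independent over $\mK$ and the number of terms minimal. Multiplying on both sides by elements of $A$ and using simplicity, we may assume $a_1=1$. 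Commutators with elements of $A$ then have fewer terms, so they vanish, forcing each $a_i\in Z(A)=\mK$. The usual induction then gives $I=(I\cap\mK[\Omega^{\pm1}])\cdot J_x$.

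Hence the primes of $J_x$ are $0$ and $\langle\Omega-\alpha\rangle$ for $\alpha\in\mK^*$. The quotients are $A$ itself or $J_x$, both domains. Contracting to $J$: for $\alpha\neq0$, $\Omega-\alpha$ is central with $J/\langle\Omega-\alpha\rangle$ $x$-torsionfree, since $x^2=\alpha g$ is invertible there. So $\langle\Omega-\alpha\rangle_x\cap J=\langle\Omega-\alpha\rangle$, and $J/\langle \Omega-\alpha\rangle\cong A$ is a simple domain. Thus these ideals are maximal and completely prime, which proves parts (1) and (3).

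For part (4), let $I\neq 0$ be an ideal. If $x\notin \sqrt{I}$, then $I_x\neq 0$, so $I_x$ contains a nonzero Laurent polynomial in $\Omega$, and clearing denominators by a power of $x^2=\Omega g$ gives a nonzero element of $I\cap\mK[\Omega]$. Suppose instead $x\in\sqrt I$, i.e.\ $x^n\in I$ after using normality and the Noetherian property. Then $\Omega^n=x^{2n}g^{-n}\in I$, which is again nonzero and central. Since $J_x$ is only a localization, I will check this argument directly: for $a\in I$ nonzero, $x^ka\in I_x\cap J$, and $I_x\cap J\neq I$ in general, but some power $x^m$ times an element of $\mK[\Omega]$ lies in $I$. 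Because $x^2=\Omega g$, this produces $\Omega^{m}f(\Omega)g^{\ast}\in I$, and multiplying by a power of $g$ gives a nonzero central element of $I$.

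For part (2), maximal ideals are primitive. The ideals $\langle\Omega-\alpha\rangle$ with $\alpha\in\mK^*$ are maximal by the above, and the ideals $\langle x,g-\beta,y-\gamma\rangle$ are maximal since they have codimension $1$. The remaining primes are $0$, $\langle x\rangle$, and $\langle x,\gp\rangle$ with $\operatorname{ht}\gp=1$. These are not primitive: a primitive ideal $P$ of a Noetherian algebra of countable dimension has $Z(\Frac(J/P))$ algebraic over $\mK$ (Dixmier's lemma, or the Schur-type argument giving the endomorphism ring of a simple module algebraic). In each of these cases the centre of the quotient, or of its fraction field, contains a transcendental element: $\Omega$ for $\langle0\rangle$, and nonconstant elements of the commutative quotient in the other cases (for commutative $\mK[g^{\pm1},y]$, primitive means maximal).
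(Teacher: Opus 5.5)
Your proof is correct. For parts (1)--(3) it is the paper's argument: you split by whether a prime contains the normal element $x$, use $J/\langle x\rangle\cong\mK[g^{\pm1},y]$ and $J_x\cong\mK[\Omega^{\pm1}]\otimes A$ with $A$ central simple, and contract using that $x^2=\alpha g$ is a unit modulo $\Omega-\alpha$. The paper only asserts the ideal correspondence for $\mK[\Omega^{\pm1}]\otimes A$, and your minimal-length argument supplies it.

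The genuine difference is in part (4). The paper notes that a nonzero ideal of the Noetherian ring $J$ contains a product $\gp_1\cdots\gp_n$ of nonzero primes. Each of these meets $\mK[\Omega]$ by (1), so the product of those central elements lies in the ideal. You instead extend $I$ to $J_x$, find a nonzero $c\in\mK[\Omega]$ there, and pull back using $x^2=\Omega g$. Your route needs only the ideal structure of $J_x$, not the full classification; the paper's is shorter once (1) is available.

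Your case split in (4) is unnecessary: $J$ is a domain, so $I_x\neq0$ for every nonzero $I$. The ``check directly'' step can also be stated cleanly. Clearing denominators, using that $x$ is normal and $c$ is central, gives $x^n c\in I$ for some $n$. Hence $x^{2n}c=\Omega^n g^n c\in I$, and $\Omega^n c$ is a nonzero element of $I\cap\mK[\Omega]$.

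In (2), Dixmier's countable-dimension lemma requires $\mK$ to be uncountable, which is not assumed here. The correct justification is that $J$, an Ore extension of an affine commutative algebra, satisfies the Nullstellensatz, so primitive primes are rational. This is the same principle behind the paper's one-line remark that $\langle0\rangle$ is not primitive because $Z(J)=\mK[\Omega]$.
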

\begin{proof}
	(1) We first partition $\Spec(J)$ according to whether a prime ideal contains $x$. Let $\Spec(J,x)$ (resp. $\Spec_x(J)$) denote the set of prime ideals containing (resp. not containing) $x$. Then 
	\begin{equation*}
	\Spec(J)=\Spec(J, x)\,\cup \, \Spec_x(J).
	\end{equation*}
	
	Prime ideals containing $x$ correspond bijectively to prime ideals of $J/\langle x \rangle \cong \mK[g^{\pm 1},y]$. Thus $\Spec(J,x)=\{ \langle x, \gp \rangle \mid \gp \in \Spec(\mK[g^{\pm 1},y]) \}$. Since $x$ is normal in $J$, prime ideals not containing $x$ correspond bijectively to prime ideals of the localization $J_x$. By \eqref{J0x}, $J_x \cong \mK[\Omega^{\pm 1}] \otimes \mK[x^{\pm 1}][y';\d]$, where $\mK[x^{\pm 1}][y';\d]$ is simple with centre $\mK$. Hence the prime ideals of $J_x$ correspond bijectively to those of $\mK[\Omega^{\pm1}]$, namely
	\begin{equation*}
	\Spec(J_x)=\{ \langle 0 \rangle \} \,\,\cup \,\, \{ (\Omega-\alpha)J_x \mid \alpha \in \mK^* \}. 
	\end{equation*}
	The zero ideal of $J_x$ contracts to $\langle 0 \rangle$ in $J$. For $\alpha\in \mK^*$, consider the central quotient
	\begin{equation*}
	J^{\alpha}:=J/(\Omega-\alpha)J. 
	\end{equation*}
	Since $\alpha\neq 0$, the relation $x^2=\Omega g$ implies that $x$ becomes invertible in $J^\alpha$. Hence localization does not change the algebra:
	\begin{equation*}
	J^{\alpha} \cong J^{\alpha}_x \cong J_x/(\Omega-\alpha)J_x.
	\end{equation*}
	Using the structure of $J_x$ (see \eqref{J0x}), we obtain
	\begin{equation} \label{Jalph} 
	J^{\alpha } \cong \mK[x^{\pm 1}][y';\d], \quad \d(x)=-\tfrac{1}{2}x,
	\end{equation}
	which is a central simple algebra. Hence $J^\alpha$ is simple, so $(\Omega-\alpha)J$ is a maximal ideal of $J$. Moreover, contraction gives $J \cap (\Omega-\alpha)J_x=(\Omega-\alpha)J$. Therefore, $\Spec_x(J)=\{ \langle 0 \rangle \} \,\cup \, \{ \langle \Omega-\alpha \rangle \mid \alpha \in \mK^* \}$. Combining both cases gives the full description of $\Spec(J)$. The inclusions of prime ideals are obvious.
	
	(2) The description of maximal ideals follows from the inclusions of primes given in statement (1). All ideals in $\Spec(J,x)$ correspond to primes in the commutative algebra $\mK[g^{\pm1},y]$, hence are primitive exactly when maximal. Since all maximal ideals are primitive, and the zero ideal $\langle 0 \rangle$ is not primitive because $Z(A)=\mK[\Omega]$, we have $\Prim(J)=\Max(J)$.
	
	(3) This follows from statement (1). 
	
	(4) Let $\ga \neq 0$ be an ideal of $J$. Since $J$ is Noetherian, there exists prime ideals $\gp_1,\ldots, \gp_n$ in $J$ such that $\gp_1 \cdots \gp_n \subseteq \ga$. By statement (1), every nonzero prime of $J$ intersects $Z(J)$ nontrivially,  and therefore so does $\ga$. 
\end{proof}

\subsection{Simple $J$-modules}
For an algebra $R$, let $\widehat{R}$ denote the set of isomorphism classes $[M]$ of simple $R$-modules $M$.  If $\mathcal{P}$ is a property of modules that is invariant under module isomorphisms, then we write $\widehat{R}(\mathcal P)$ for the subset of $\widehat{R}$ consisting of all isomorphism classes of simple $R$-modules satisfying $\mathcal{P}$.
\begin{corollary} \label{SimMod}
	Assume that $\mK$ is algebraically closed of characteristic zero. 
	\begin{enumerate}
		\item 	 A complete set of pairwise non-isomorphic finite-dimensional simple $J$-modules is
		\begin{equation*}
		\{[S_{\beta,\gamma}] \mid \beta \in \mK^*, \gamma \in \mK\},\quad \text{where } S_{\beta,\gamma}:=J/\langle x, g-\beta, y-\gamma \rangle. 
		\end{equation*}
		In particular, every finite-dimensional simple $J$-module is one-dimensional.
		\item 	A simple $J$-module $M$ is infinite-dimensional if and only if the central element $\Omega$ acts on $M$ by a nonzero scalar. Thus
		\begin{equation*}
		\widehat{J}({\text{\rm infinite-dimensional}})=\bigsqcup_{\alpha \in \mK^*} \widehat{J^{\alpha}}.  
		\end{equation*}
	\end{enumerate}
\end{corollary}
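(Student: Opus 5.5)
The plan is to read everything off Theorem \ref{a26May26} by passing from simple modules to their annihilators. For a simple $J$-module $M$, the annihilator $\ann_J(M)$ is a primitive ideal. By Theorem \ref{a26May26}(2) it is therefore one of two types: $\langle \Omega-\alpha\rangle$ with $\alpha\in\mK^*$, or $\langle x,g-\beta,y-\gamma\rangle$ with $\beta\in\mK^*$ and $\gamma\in\mK$. Since $J$ has countable dimension and $\mK$ is algebraically closed, Schur's lemma (the Dixmier--Quillen form) shows that the central element $\Omega$ acts on $M$ by a scalar $\alpha\in\mK$. We also have $\alpha\in\ann_J(M)+\Omega$, so $\Omega-\alpha\in\ann_J(M)$.

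For (1), suppose first that $M$ is finite-dimensional. Then $J/\ann_J(M)$ embeds in $\operatorname{End}_\mK(M)$ and so is finite-dimensional. By \eqref{Jalph}, the quotient $J^\alpha\cong \mK[x^{\pm1}][y';\d]$ is infinite-dimensional (it contains $\mK[x^{\pm1}]$). Hence $\ann_J(M)$ cannot be of the form $\langle\Omega-\alpha\rangle$. It must then be $\langle x,g-\beta,y-\gamma\rangle$, and $J/\ann_J(M)\cong\mK$. So $M$ is a simple module over $\mK$, i.e.\ $M\cong S_{\beta,\gamma}$, which is one-dimensional. Conversely, each $S_{\beta,\gamma}$ is one-dimensional and hence simple. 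Distinct pairs $(\beta,\gamma)$ give distinct annihilators, so the modules $S_{\beta,\gamma}$ are pairwise non-isomorphic. One should check that $S_{\beta,\gamma}\neq 0$, i.e.\ that the ideal is proper. This holds because $J/\langle x\rangle\cong\mK[g^{\pm1},y]$, so the ideal corresponds to a maximal ideal of that commutative ring.

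For (2), consider the scalar $\alpha$ by which $\Omega$ acts. If $\alpha=0$, then $\Omega=x^2g^{-1}$ kills $M$. Since $g$ is invertible, $x^2 M=0$. The subspace $xM$ is a submodule because $x$ is normal, so $xM$ is either $0$ or $M$. If $xM=M$, then $M=x^2M=0$, which is impossible. Hence $xM=0$, and $M$ is a simple module over $J/\langle x\rangle\cong\mK[g^{\pm1},y]$, so it is one-dimensional. If instead $\alpha\neq0$, then $\ann_J(M)\ni\Omega-\alpha$ and, by Theorem \ref{a26May26}(2), $\ann_J(M)$ equals $\langle\Omega-\alpha\rangle$. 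By part (1), $M$ is then infinite-dimensional: otherwise its annihilator would be of the form $\langle x,g-\beta,y-\gamma\rangle$, which contains $\Omega$ and not $\Omega-\alpha$. This proves the equivalence.

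The simple modules on which $\Omega$ acts by $\alpha\neq0$ are exactly the simple $J^\alpha$-modules, pulled back along $J\to J^\alpha$. Modules with different values of $\alpha$ are non-isomorphic, so we obtain the disjoint union $\bigsqcup_{\alpha\in\mK^*}\widehat{J^\alpha}$. The only delicate point is the Schur-lemma step showing that $\Omega$ acts by a scalar on possibly infinite-dimensional simple modules. This is standard for countably generated algebras over an uncountable field, or more generally via the Dixmier--Quillen lemma for affine algebras. I would invoke that lemma rather than reprove it.
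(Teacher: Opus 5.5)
Your proposal is correct and follows essentially the paper's route. Schur's lemma makes $\Omega$ act by a scalar, Theorem \ref{a26May26}(2) pins down $\ann_J(M)$, and the finite versus infinite codimension of the two types of primitive ideals separates the finite-dimensional simples from the infinite-dimensional ones. The differences are minor. You treat $\alpha=0$ directly via the normality of $x$ rather than through the list of primitive ideals, and you are more explicit than the paper that the scalar action of $\Omega$ on infinite-dimensional simples needs a Dixmier--Quillen type Schur lemma. That lemma does apply to $J$: the filtration with $\deg g^{\pm1}=0$, $\deg x=1$, $\deg y=2$ has a commutative affine associated graded ring.
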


\begin{proof}
	(1) Let $M$ be a finite-dimensional simple $J$-module. Then $\ann_J(M)$ is a primitive ideal of finite-codimension. By Theorem \ref{a26May26}(2), we have $\ann_J(M)=\langle x, g-\beta, y-\gamma \rangle$ for some $\beta \in \mK^*$ and $\gamma \in \mK$. It follows that $M \cong S_{\beta,\gamma}$. 
	
	(2) Let $M$ be an infinite-dimensional simple $J$-module. By Theorem \ref{a26May26}(2), $\ann_J(M)=\langle \Omega-\alpha \rangle$ for some $\alpha \in\mK^*$. Thus $M$ is a simple module over $J^{\alpha}=J/(\Omega-\alpha)J$.	
	By \eqref{Jalph}, $J^{\alpha}\cong \mK[x^{\pm 1}][y';\d]$ where $\d(x)=-\tfrac{1}{2}x$, which is a simple infinite-dimensional algebra. Hence every simple $J^{\alpha}$-module is infinite-dimensional. 
	
	Finally, let $M$ be any simple $J$-module. Since $\Omega$ is central and $\mK$ is algebraically closed, Schur's lemma implies that $\Omega$ acts on $M$ as a scalar $\alpha\in\mK$. If $\alpha=0$, then Theorem~\ref{a26May26}(2) shows that $M$ is finite-dimensional. Thus $M$ is infinite-dimensional if and only if $\alpha\in\mK^*$, and the stated decomposition follows.
\end{proof}

\begin{remark}
	Corollary \ref{SimMod}(2) reduces the classification of infinite-dimensional simple $J$-modules to the classification of simple modules over the algebras $J^\alpha$ with $\alpha\in\mK^*$. By \eqref{Jalph}, each $J^\alpha$ is isomorphic to the Ore extension
	$$
	\mK[x^{\pm1}][y';\delta],
	\qquad
	\delta(x)=-\tfrac12 x.
	$$
	Setting $t=-2y'x^{-1}$, one obtains $[t,x]=1$, and hence $J^{\alpha}\cong A_{1}(\mK)_x$, 
	the localization of the first Weyl algebra $A_1(\mK)=\mK\langle x,t\mid tx-xt=1\rangle$ at the powers of $x$. Therefore, the classification of infinite-dimensional simple $J$-modules is equivalent to the classification of simple modules over a localization of the first Weyl algebra. A classification of simple modules over the Weyl algebra was obtained by Block \cite{Block}, with further developments in \cite{Bavula-OreD}.
\end{remark}

\subsection{The automorphism group of $J$}
We determine the automorphism group of the algebra $J$.
There are obvious subgroups of $\Aut(J)$:
\begin{equation*}
\begin{aligned}
\mathbb{T}^2&=\{t_{\alpha, \beta}\mid \alpha,\beta\in \mK^* \}, \quad &t_{\alpha,\beta}:& \quad g \mapsto \alpha g, \quad x \mapsto \beta x, \quad y \mapsto \beta y. \\
\mathbb{I}&=\{1, \iota \}, \quad &\iota: &\quad g \mapsto g^{-1}, \quad x \mapsto g^{-1}x, \quad y \mapsto -g^{-1}y, \\
\mathbb{A}&=\{ \tau_f \mid f\in \mK[g^{\pm 1},x] \}, \quad &\tau_f:& \quad g \mapsto g, \quad x \mapsto x, \quad y \mapsto y+f. 
\end{aligned}
\end{equation*}
The group $\mathbb{T}^2$ is a two-dimensional algebraic torus, since $\mathbb{T}^2 \cong (\mK^*)^2$. The group $\mathbb{I}$ has order $2$, as $\iota^2=\mathrm{id}$.  Moreover, $\mathbb{A}$ is isomorphic to the
additive group of the algebra $\mK[g^{\pm 1},x]$.  

We will use the following well-known result. 
\begin{lemma} \label{a11Jun26}   
	Let $r$ and $s$ be nonzero normal elements of a domain $R$ such that $\langle r \rangle=\langle s \rangle$. Then there exists a unit $u\in R$ such that $r=us$. 
\end{lemma}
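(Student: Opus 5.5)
Since $\langle r\rangle=\langle s\rangle$ and both elements are normal, the principal two-sided ideals are one-sided: $\langle r\rangle = rR = Rr$ and $\langle s\rangle = sR = Rs$. The plan is to write each element as a multiple of the other, use cancellation in the domain $R$ to see that the two multipliers are mutually inverse, and hence obtain the required unit.

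First we record that normality gives $RrR = rRR = rR$, so $\langle r\rangle = rR = Rr$, and likewise $\langle s\rangle = sR = Rs$. From $r\in\langle s\rangle = Rs$ we choose $u\in R$ with $r = us$. From $s\in\langle r\rangle = Rr$ we choose $v\in R$ with $s = vr$.

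Substituting, we get $r = us = uvr$, that is, $(1-uv)r = 0$. Since $R$ is a domain and $r\neq 0$, this forces $uv = 1$. Symmetrically, $s = vr = vus$ gives $(1-vu)s = 0$, and $s\neq0$ forces $vu = 1$. Hence $u$ is a two-sided unit with inverse $v$, and $r = us$, as claimed.

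No step is really an obstacle. The only point needing care is to write both factorizations on the same side, here both as left multiples via $Rs$ and $Rr$, so that cancellation of $r$ and of $s$ takes place on the correct side. Normality is exactly what allows this, because it lets us pass from the two-sided ideal to the one-sided description $Rs$ (respectively $Rr$).
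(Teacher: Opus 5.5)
Your proposal is correct and follows the paper's proof essentially verbatim: use normality to write $r=us$ and $s=vr$, then cancel in the domain to get $uv=vu=1$. Your explicit observation that normality gives $\langle r\rangle=Rr$ and $\langle s\rangle=Rs$ simply spells out a step the paper leaves implicit.
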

\begin{proof}
	Since $r$ and $s$ are normal, there exist $u,v\in R$ such that $r=us$ and $s=vr.$
	Hence $r=uvr$, so $(1-uv)r=0$. As $R$ is a domain, it follows that $uv=1$. Similarly, $s=vus$, whence $(1-vu)s=0$, and therefore $vu=1$.  Thus $u$ is a unit with inverse $v$, and $r=us$.
\end{proof}
 
\begin{theorem}
	The automorphism group of $J$ is $\Aut(J)=\mathbb{T}^2 \ltimes \mathbb{I} \ltimes \mathbb{A}$. 
\end{theorem}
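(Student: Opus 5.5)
Let $\phi\in\Aut(J)$. The plan is to show that, after composing $\phi$ with suitable elements of $\mathbb{T}^2$ and $\mathbb{I}$, it fixes $g$ and $x$. Lemma~\ref{a10Jun26} then forces what happens to $y$.

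We begin with easy facts. First, $\mathbb{T}^2$, $\mathbb{I}$ and $\mathbb{A}$ really are automorphisms. For this one checks the defining relations; for $\iota$ these are $\iota(g)\iota(y)\iota(g)^{-1}=\iota(y)+\iota(x)$ and $[\iota(y),\iota(x)]=-\tfrac12\iota(x)^2$, a short computation. Next, $\mathbb{A}$ is normalized by $\mathbb{T}^2$ and by $\iota$, since each of these preserves $\mK[g^{\pm1},x]$. Also $\iota t_{\alpha,\beta}\iota=t_{\alpha^{-1},\beta\alpha^{-1}}$, so the product $\mathbb{T}^2\mathbb{I}\mathbb{A}$ is a group with the stated semidirect structure. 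Finally, the three subgroups intersect trivially, as one sees by looking at the images of $g$ and $y$.

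Now fix $\phi$. Since $\phi$ preserves the centre $Z(J)=\mK[\Omega]$, we have $\phi(\Omega)=a\Omega+b$ with $a\neq0$. The height-one primes are permuted by $\phi$. By Theorem~\ref{a26May26}(1), the height-one primes are $\langle x\rangle$ and the $\langle\Omega-\alpha\rangle$ with $\alpha\in\mK^*$. The prime $\langle x\rangle$ is distinguished among them: it is the only one that is not maximal, since it lies below the primes $\langle x,\gp\rangle$. Hence $\phi(\langle x\rangle)=\langle x\rangle$. Moreover $\phi(x)$ is normal, so Lemma~\ref{a11Jun26} gives $\phi(x)=ux$ for a unit $u$.

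The units of $J$ are $\mK^* g^{k}$. To see this, use the $y$-degree and then the $x$-degree in the Ore extension $\mK[g^{\pm1},x][y;\d]$; both are additive on products, so a unit has degree zero in each and lies in $\mK[g^{\pm1}]$. Likewise $\phi(g)$ is a unit, so $\phi(g)=cg^{m}$, and $m=\pm1$ because $\phi$ is surjective on the group of units modulo scalars, which is $\langle g\rangle$. Composing with $\iota$ if needed, we may assume $m=1$.

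Next, $\phi(\Omega)=\phi(x)^2\phi(g)^{-1}=c^{-1}u^2x^2g^{-1}$ must lie in $\mK[\Omega]$. Writing $u=eg^k$, this forces $g^{2k}\in\mK$, so $k=0$ and $u=e\in\mK^*$. Composing with $t_{c^{-1},e^{-1}}$, we reduce to the case $\phi(g)=g$ and $\phi(x)=x$.

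Then $\phi(y)$ satisfies $g\phi(y)g^{-1}=\phi(y)+x=g(y)g^{-1}-y+\phi(y)$. So $\phi(y)-y\in C_J(g)=\mK[g^{\pm1},x]$ by Lemma~\ref{a10Jun26}, and hence $\phi=\tau_f$ with $f=\phi(y)-y$. Every such $\tau_f$ preserves the relation $[y,x]=-\tfrac12x^2$, since $f$ commutes with $x$.

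The main obstacle is the height-one prime argument: one must justify that $\phi(\langle x\rangle)=\langle x\rangle$ rigorously via the maximality criterion. One must also check that the unit group is exactly $\mK^*g^{\Z}$, which needs the degree arguments.
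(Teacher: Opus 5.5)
Your proof is correct and follows essentially the same route as the paper. The unit group forces $\phi(g)=cg^{\pm1}$. The unique non-maximal height-one prime $\langle x\rangle$, together with Lemma~\ref{a11Jun26}, gives $\phi(x)=ux$. The centre $\mK[\Omega]$ rules out a power of $g$ in $u$, and Lemma~\ref{a10Jun26} determines $\phi(y)$. Beyond the paper, you justify two points it only asserts: that the units are exactly $\mK^*g^{\Z}$, and that $\phi(g)$ can only involve $g^{\pm1}$. You also check the semidirect-product structure.
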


\begin{proof}
	Clearly, the group $\Aut(J)$ contains the semidirect product $\mathbb{T}^2 \ltimes \mathbb{I} \ltimes \mathbb{A}$. We prove the reverse inclusion.
	
	Let $\s \in \Aut(J)$. Since the group of units of $J$ is $U(J)=\{\mK^*g^i \mid i\in \Z \}$, and automorphisms preserve units, we have $\s(g)=\alpha g$ or $\s(g)=\alpha g^{-1}$ for some $\alpha \in \mK^*$. Composing with $\iota$ if necessary, we may assume that $\s(g)=\alpha g$. 
	
	By Theorem \ref{a26May26}, the ideal $\langle x\rangle$ is the unique non-maximal height-one prime ideal of $J$. Hence $\s(\langle x \rangle)=\langle x \rangle$. Since $x$ is normal, Lemma~\ref{a11Jun26} implies that $\s(x)=u x$ for some unit $u\in J$. Therefore $\s(x)=\beta g^ix$ for some $\beta \in \mK^*$ and $i\in \Z$. 	
	Since automorphisms preserve the centre, there exist
	$\mu\in\mK^*$ and $\nu\in\mK$ such that
	 $\s(\Omega)=\mu \Omega+\nu$. Using $\Omega=x^2 g^{-1}$, we obtain
	 $\alpha^{-1}\beta^2 g^{2i-1}x^2=\mu g^{-1}x^2+\nu$. Comparing both sides yields $i=0$, $\nu=0$ and $\mu=\alpha^{-1}\beta^2$. Hence $\s(x)=\beta x$. 
	 
	 Applying $\s$ to the relation $gyg^{-1}=y+x$ gives $g\s(y)g^{-1}=\s(y)+\beta x$. It follows that
	 \begin{equation*}
	g\big(\s(y)-\beta y\big)g^{-1}=\s(y)-\beta y.
	 \end{equation*} 
	 Hence $\s(y)-\beta y$ belongs to the centralizer $C_J(g)$. By Lemma \ref{a10Jun26}, $C_J(g)=\mK[g^{\pm1},x]$, and therefore, $\s(y)=\beta y+f$ for some $f\in \mK[g^{\pm 1},x]$. It follows that $\s= \tau_{\beta^{-1}f} t_{\alpha,\beta}$. Thus $\Aut(J) \subseteq \mathbb{T}^2 \ltimes \mathbb{I} \ltimes \mathbb{A}$. This completes the proof. 		 
\end{proof}

\section{Prime ideals, representations, and automorphisms of  $\mathfrak{U}(\l)$}  \label{AlgU}  

In this section, we study the algebra $\mathfrak{U}(\l)$ under the assumption that $\lambda \neq 0$. We determine its centre, describe its prime and primitive spectra, and compute its automorphism group. We also construct explicit simple modules whose annihilators realize all primitive ideals. Recall that $\mathfrak{U}(\l) \cong \mathfrak{U}(\l')$ for all $\l, \l' \in \mK^*$.  Throughout, we write $\mathfrak{U}:=\mathfrak{U}(\l)$. 

\subsection{The centre of $\mathfrak{U}(\l)$}
We introduce the following elements
\begin{equation} \label{phipsi}  
\begin{aligned}
\phi_{+}:&=x+\sqrt{2\l}(g-1),\quad  & \phi_{-}:&=x-\sqrt{2\l}(g-1),\\
\psi_{+}:&=x+\sqrt{2\l}(g+1), &\psi_{-}:&=x-\sqrt{2\l}(g+1). 
\end{aligned}
\end{equation}
Then one has
\begin{equation*}
\psi_{+}=\phi_{+}+2\sqrt{2\l}, \qquad \psi_{-}=\phi_{-}-2\sqrt{2\l}. 
\end{equation*}
A straightforward computation yields
\begin{equation*}
\begin{aligned}\
[y, \phi_{+}]=-\frac{1}{2} \phi_{+} \psi_{+}=-\frac{1}{2}\phi_{+}(\phi_{+}+2\sqrt{2\l}), \\
[y, \phi_{-}]=-\frac{1}{2} \phi_{-} \psi_{-}=-\frac{1}{2}\phi_{-}(\phi_{-}-2\sqrt{2\l}),\\
[y, \psi_{+}]=-\frac{1}{2} \psi_{+} \phi_{+}=-\frac{1}{2}\psi_{+}(\psi_{+}-2\sqrt{2\l}), \\
[y, \psi_{-}]=-\frac{1}{2} \psi_{-} \phi_{-}=-\frac{1}{2}\psi_{-}(\psi_{-}+2\sqrt{2\l}).
\end{aligned}
\end{equation*}
It follows that the elements $\phi_{\pm}$ and $\psi_{\pm}$ are normal in $\mathfrak{U}$. Furthermore, 
\begin{equation} \label{Opp} 
\Omega+4\l= \phi_{+} \phi_{-}g^{-1}, \qquad \Omega-4\l=\psi_{+}\psi_{-}g^{-1}. 
\end{equation}

Let $\mathfrak{U}_{\CS}$ denote the localization of $\mathfrak{U}$ at the Ore set $\CS:=\mK[g^{\pm 1}, x] \setminus\{0\}$, and set 
\begin{equation*}
K:=\Frac(\mK[g^{\pm 1},x])=\mK(g,x).
\end{equation*}
 By \eqref{OreJ}, the localized algebra $\mathfrak{U}_{\CS}$ is a skew polynomial algebra over $K$, namely 
\begin{equation*}
\mathfrak{U}_{\CS}=K[y;\d]
\end{equation*}
where $\d$ is the derivation of $K$ determined by $\d(g)=-xg$ and $\d(x)=-\tfrac{1}{2}x^2+\l(1-g^2)$. 

We claim that $K=\mK(\Omega, \phi_{+})$. The inclusion $\mK(\Omega, \phi_{+}) \subseteq K$ is immediate. To establish the reverse inclusion, it suffices to show that $g$ and $x$ belong to $\mK(\Omega, \phi_{+})$. Note that $\psi_{+}=\phi_{+}+2\sqrt{2\l}\in \mK(\Omega, \phi_{+})$.  Using \eqref{Opp}, we obtain
\begin{equation*}
(\Omega+4\l)\phi_{+}^{-1}-(\Omega-4\l)\psi_{+}^{-1}=(\phi_{-}-\psi_{-})g^{-1}=2\sqrt{2\l} g^{-1} \in \mK(\Omega, \phi_{+}). 
\end{equation*}
Thus
\begin{equation*}
g=\frac{2\sqrt{2\l} \phi_{+}\psi_{+}}{(\Omega+4\l)\psi_{+}-(\Omega-4\l)\phi_{+}} \in \mK(\Omega, \phi_{+}). 
\end{equation*}
Next, using the first identity in \eqref{Opp}, we have
\begin{equation*}
\phi_{-}=(\Omega+4\l)\phi_{+}^{-1}g \in \mK(\Omega, \phi_{+}). 
\end{equation*}
Hence
\begin{equation*}
x=\frac{1}{2}(\phi_{+}+\phi_{-}) \in \mK(\Omega, \phi_{+}). 
\end{equation*}
This proves $K=\mK(\Omega, \phi_{+})$. Consequently,
\begin{equation} \label{JSOre} 
\mathfrak{U}_{\CS}=K[y;\d]=\mK(\Omega)(\phi_{+})[y;\d]
\end{equation}
where $\d$ is the derivation of $K$ determined by $\d(\Omega)=0$ and $\d(\phi_{+})=-\frac{1}{2}\phi_{+}(\phi_{+}+2\sqrt{2\l})$.

The following lemma determines the center of $\mathfrak{U}$. 
\begin{lemma}
Let $\mathfrak{U}=\mathfrak{U}(\l)$ with $\l \neq 0$. Then $Z(\mathfrak{U}_{\CS})=\mK(\Omega)$ and $Z(\mathfrak{U})=\mK[\Omega]$. 
\end{lemma}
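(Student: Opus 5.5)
We work with the presentation \eqref{JSOre}, $\mathfrak{U}_{\CS}=F(\phi_+)[y;\d]$ with $F:=\mK(\Omega)$. Here $\d$ is $F$-linear, $\d(\Omega)=0$, and $\d(\phi_+)=-\tfrac12\phi_+(\phi_++2\sqrt{2\l})$. We first show $Z(\mathfrak{U}_{\CS})=F$ by analysing the $y$-degree of a central element, and then obtain $Z(\mathfrak{U})$ by contracting to $\mathfrak{U}$.

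\emph{Step 1: the centre of $\mathfrak{U}_{\CS}$.} Let $z=\sum_{i=0}^n a_i y^i\in Z(\mathfrak{U}_{\CS})$ with $a_i\in K$ and $a_n\neq 0$. Since $y^i c = c y^i + i\,\d(c) y^{i-1}+\dots$ for $c\in K$, the coefficient of $y^{n-1}$ in $[z,\phi_+]=0$ is $n a_n\d(\phi_+)$. This coefficient is nonzero when $n\ge 1$, because $\d(\phi_+)\neq 0$ and $\char\mK=0$. Hence $n=0$ and $z=a_0\in K$.

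Commuting with $y$ then gives $\d(a_0)=0$, so it remains to compute the constants of $\d$ on $K=F(\phi_+)$. Put $t=\phi_+$ and $p(t)=-\tfrac12 t(t+2\sqrt{2\l})$. Then $\d=p(t)\,\frac{d}{dt}$ on $F(t)$, since $\d$ is an $F$-linear derivation and $t$ is transcendental over $F$. As $p\neq 0$, we get $\d(a)=0$ if and only if $\frac{da}{dt}=0$, if and only if $a\in F$, using characteristic zero. Thus $Z(\mathfrak{U}_{\CS})=\mK(\Omega)$.

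The one point needing care is that $\Omega$ and $\phi_+$ are algebraically independent, so that $K=F(t)$ is a purely transcendental extension. This holds because $K=\mK(\Omega,\phi_+)$ has transcendence degree $2$ over $\mK$, which was established just before the lemma.

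\emph{Step 2: contraction to $\mathfrak{U}$.} We have $Z(\mathfrak{U})\subseteq \mathfrak{U}\cap Z(\mathfrak{U}_{\CS})=\mathfrak{U}\cap\mK(\Omega)$. Since $\mathfrak{U}$ is free over $\mK[g^{\pm1},x]$ with basis $\{y^i\}$, we get $\mathfrak{U}\cap K=\mK[g^{\pm1},x]$. So it suffices to show $\mK[g^{\pm1},x]\cap\mK(\Omega)=\mK[\Omega]$; this is the main obstacle.

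Write an element as $f(\Omega)/h(\Omega)$ in lowest terms with $h$ monic, and suppose $f(\Omega)=r\,h(\Omega)$ with $r\in\mK[g^{\pm1},x]$. If $\deg h\ge1$, pick a root $c$ of $h$. Then $\Omega-c=g^{-1}\bigl(x^2-2\l(1+g^2)-cg\bigr)$. The polynomial $x^2-q(g)$, with $q(g)=2\l(1+g^2)+cg$, lies in $\mK[g^{\pm1}][x]$ and is monic of degree $2$ in $x$. We then consider the quotient $\mK[g^{\pm1},x]/(\Omega-c)$, which is nonzero. Since $h(\Omega)$ vanishes in this quotient, so does $f(\Omega)$. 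But $f(\Omega)\equiv f(c)$ there, so $f(c)=0$, contradicting coprimality. Note that irreducibility of $\Omega-c$ is not needed: nonvanishing of the quotient suffices, and it holds because the quotient is free of rank $2$ over $\mK[g^{\pm1}]$.

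Hence $h=1$, and $Z(\mathfrak{U})\subseteq\mK[\Omega]$. The reverse inclusion holds because $\Omega$ is central.
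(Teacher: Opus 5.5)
Your proof is correct and follows the same outline as the paper. You show that a central element of $\mathfrak{U}_{\CS}$ has $y$-degree $0$ by commuting it with $\phi_+$, identify the constants of $\d$ on $K=\mK(\Omega)(\phi_+)$ as $\mK(\Omega)$, and then rule out denominators with a coprime-fraction argument modulo $\Omega-c$.

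The differences are cosmetic. The paper first rescales to $y'=-2y\phi_+^{-1}(\phi_++2\sqrt{2\l})^{-1}$ so that $[y',\phi_+]=1$, where you use a leading-coefficient computation. It also reduces modulo $(\Omega-\mu)\mathfrak{U}$ directly in $\mathfrak{U}$, whereas you pass to $\mK[g^{\pm1},x]$. Your explicit check that the quotient is nonzero supplies a point the paper leaves implicit.
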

\begin{proof}
	Define $y':=-2y \phi_{+}^{-1}(\phi_{+}+2\sqrt{2\l})^{-1} \in \mathfrak{U}_{\CS}$. Then $[y', \phi_{+}]=1$. In view of \eqref{JSOre}, it follows that
	\begin{equation*}
	\mathfrak{U}_{\CS}=K[y';\d']
	\end{equation*}
	where $\d'$ is the derivation of $K$ determined by $\d'(\Omega)=0$ and $\d'(\phi_{+})=1$. 
	
	Let $z=\sum_{i=0}^n a_i y'^i \in Z(\mathfrak{U}_{\CS})$ where $a_i \in K$. Since $z$ commutes with $\phi_+$, we have
	\begin{equation*}
	0=[z, \phi_{+}]=\sum_{i=0}^n ia_iy'^{i-1}.    
	\end{equation*}
	This forces $a_i=0$ for all $i\geq 1$, hence $z \in K$. Since $z$ also commutes with $y'$, we obtain $\delta'(z)=0$. Hence $Z(\mathfrak{U}_{\mathcal S}) = K^{\delta'} = \mK(\Omega)$. 
	
	Now let $z \in Z(\mathfrak{U})$. Then $z \in \mathfrak{U} \cap Z(\mathfrak{U}_{\mathcal S}) = \mathfrak{U} \cap \mK(\Omega)$.  Write $z=p(\Omega)q(\Omega)^{-1}$ where $p,q\in \mK[\Omega]$ are coprime. Suppose that $q$ is nonconstant, and choose $\mu\in\mK$ such that $q(\mu)=0$. Since $(p,q)=1$, we have $p(\mu)\neq 0$.
	Reducing the identity $p(\Omega)=zq(\Omega)$ modulo $(\Omega-\mu)\mathfrak{U}$ yields $p(\mu)=0$, a contradiction. Hence $q(\Omega)\in \mK^*$, and therefore $z\in \mK[\Omega]$. Thus $Z(\mathfrak{U})\subseteq \mK[\Omega]$. Since $\Omega$ is central, the reverse inclusion is immediate. Consequently, $Z(\mathfrak{U})=\mK[\Omega]$.
\end{proof}

The next lemma shows that every nonzero prime ideal of $\mathfrak{U}$ intersects the centre  nontrivially.
\begin{lemma} \label{1Jun26} 
	 If $P$ is a nonzero prime ideal of $\mathfrak{U}$, then $P \cap \mK[\Omega] \neq 0$. 
\end{lemma}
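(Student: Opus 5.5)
We argue by contradiction. Suppose $P$ is a nonzero prime ideal with $P\cap \mK[\Omega]=0$. The plan is to show that $P$ survives localization at a central Ore set and then use simplicity of a suitable localization to force $P$ to be everything. The most natural route passes through $\mathfrak{U}_{\CS}=K[y';\d']$. However, $\CS=\mK[g^{\pm1},x]\setminus\{0\}$ need not be disjoint from $P$, because $P$ could contain normal elements such as $\phi_\pm,\psi_\pm$. For this reason we localize in two stages.

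First we invert the central set $T:=\mK[\Omega]\setminus\{0\}$. Since $P\cap T=\emptyset$ and $T$ is central, $P_T:=P\,\mathfrak{U}_T$ is a nonzero proper prime ideal of $\mathfrak{U}_T$ with $P_T\cap\mathfrak{U}=P$. Here $\mathfrak{U}_T=\mK(\Omega)\otimes_{\mK[\Omega]}\mathfrak{U}$ is an algebra over the field $F:=\mK(\Omega)$. In $\mathfrak{U}_T$ the elements $\Omega\pm 4\l$ are invertible, so by \eqref{Opp} the normal elements $\phi_\pm$ and $\psi_\pm$ become units. Consequently, inside $\mathfrak{U}_T$ we have $g^{-1}\phi_+\phi_-\in F^*$.

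The key step is to show that $P_T\cap (\mathfrak{U}_T\cap K)$ is zero, where $\mathfrak{U}_T\cap K$ is the commutative subalgebra $A:=F[g^{\pm1},x]$. Suppose instead that $0\neq f\in P_T\cap A$. The set $P_T\cap A$ is then a nonzero ideal of $A$ that is stable under $\d$, because $[y,P_T]\subseteq P_T$ and $[y,A]\subseteq A$. Over $F$, the algebra $A=F[g^{\pm1},x]$ has Krull dimension $1$, since $x^2=(\Omega+2\l(g^{-1}+g))g$ is algebraic over $F[g^{\pm1}]$. More precisely, $A\cong F[g^{\pm1}][x]/(x^2-\Omega g-2\l(1+g^2))$ is a Dedekind-type domain.

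A nonzero $\d$-stable ideal $I$ of $A$ lies in only finitely many maximal ideals, and each such maximal ideal is then $\d$-stable in characteristic zero; this follows from a standard lemma on minimal primes over a differential ideal in a Noetherian $\mathbb{Q}$-algebra. If $\mathfrak m$ is a $\d$-stable maximal ideal of $A$, then $A/\mathfrak m$ is a finite field extension $L$ of $F$ with a derivation extending $\d|_F=0$. That derivation is zero on $L$, so $\d(A)\subseteq\mathfrak m$. In particular $\d(g)=-xg\in\mathfrak m$, which forces $x\in\mathfrak m$. Also $\d(x)=-\tfrac12x^2+\l(1-g^2)\in\mathfrak m$, so $g^2\equiv1$ modulo $\mathfrak m$, that is $g\equiv\pm1$. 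Since $x\in\mathfrak m$, this gives $\phi_+\phi_-\in\mathfrak m$ or $\psi_+\psi_-\in\mathfrak m$. But these are units in $A$, which is a contradiction. Hence $P_T\cap A=0$.

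Next we pass to the full localization. Every element of $A\setminus\{0\}$ is regular modulo $P_T$; to see this, use that $\mathfrak{U}_T$ is an iterated Ore extension and apply Goldie-type arguments, or more concretely localize at $A\setminus\{0\}$, which is Ore. Extending $P_T$ gives a nonzero proper ideal of $\mathfrak{U}_{\CS}=K[y';\d']$. By Lemma above, $K[y';\d']$ has centre $\mK(\Omega)=F$, and $\d'$ is a derivation of $K=F(\phi_+)$ with $\d'(\phi_+)=1$, which is not inner. It is standard that such a differential operator ring over a field of characteristic zero is simple. Simplicity is also visible directly: commutators with $\phi_+$ lower the $y'$-degree, so any nonzero ideal contains a nonzero element of $K$, hence a unit. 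This contradicts properness.

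The main obstacle is the middle step, showing $P\cap\CS=\emptyset$ after inverting $\mK[\Omega]$. This requires proving that a nonzero $\d$-stable ideal of $F[g^{\pm1},x]$ must be the whole ring. If the differential-ideal lemma proves awkward, a fallback is to take a nonzero $f\in P_T\cap A$ of minimal "degree" and compare $f$ with $\d(f)$ to obtain a nonunit $\d$-Darboux element; one then classifies such elements as products of $\phi_\pm,\psi_\pm$, which are units in $A$.
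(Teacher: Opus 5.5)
Your proof is correct. Its central step takes a genuinely different route from the paper's.

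The paper works over $\mK$ with $\gp=P\cap\mK[g^{\pm1},x]$, a nonzero $\delta$-stable prime, and splits into two cases. If $x\in\gp$, then $\delta(x)\in\gp$ forces $g^2-1\in\gp$, hence $\Omega\pm4\lambda\in\gp$. If $x\notin\gp$, the paper combines integrality of $\mK[g^{\pm1},x]$ over $\mK[g^{\pm1},\Omega]$, incomparability, and a minimal-$g$-degree argument. That argument uses $[y,g^i]=-ixg^i$ and primeness of $\gp$ to produce an element of $\gp\cap\mK[\Omega]$.

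You instead invert $\mK[\Omega]\setminus\{0\}$ at the outset. Then $\delta$ becomes an $F$-linear derivation of the one-dimensional affine $F$-domain $A=F[g^{\pm1},x]$. In characteristic zero an $F$-linear derivation vanishes on every finite extension of $F$. So every $\delta$-stable maximal ideal contains $\delta(g)=-xg$ and $\delta(x)$. Thus only the paper's Case~1 configuration ($x\in\mathfrak m$, $g\equiv\pm1$) can occur, and it is excluded because $\Omega\pm4\lambda$ are now units.

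What your route buys:
\begin{itemize}
\item It removes the integrality/incomparability step and the degree induction entirely. The price is the lemma that minimal primes over a differential ideal are differential, plus Zariski's lemma.
\item Conceptually, you are showing that $A$ is $\delta$-simple, hence that the central localization $\mathfrak{U}\otimes_{\mK[\Omega]}\mK(\Omega)$ is simple. That statement is equivalent to the lemma and gives Theorem~\ref{A3Jun26}(5) immediately.
\item You also give an explicit reason why $P$ must meet the coefficient ring (via simplicity of $\mathfrak{U}_{\CS}$). The paper uses this fact implicitly when it asserts $\gp\neq0$.
\end{itemize}

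The one step to tighten is the passage to $\mathfrak{U}_{\CS}$: your appeal to ``Goldie-type arguments'' for regularity modulo $P_T$ is vague. You only need that, in a Noetherian domain, an ideal disjoint from a denominator set extends to a proper two-sided ideal of the localization; this is standard (Goodearl--Warfield, Chapter~10).

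Simpler still, you can bypass $\mathfrak{U}_{\CS}$ altogether. Let $0\neq z\in P_T$ have minimal $y$-degree $n$ and leading coefficient $a_n\in A$. If $n\geq1$, then $[z,\phi_+]\in P_T$ is nonzero of $y$-degree $n-1$, with leading coefficient $n a_n\delta(\phi_+)$. This contradicts minimality, so $n=0$ and $P_T\cap A\neq0$ directly. That feeds straight into your $\delta$-stable maximal ideal argument.
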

\begin{proof}
	Recall from \eqref{OreJ} that $\mathfrak{U}=\mK[g^{\pm 1},x][y;\d]$. By \cite[Lemmas 3.19 and 3.21]{Goodearl--Warfield}, the ideal $\gp:=P \cap \mK[g^{\pm 1},x]$ is a nonzero $\d$-stable prime ideal of $\mK[g^{\pm 1}, x]$. We distinguish two cases.
	
	\textbf{Case 1: $x \in \mathfrak{p}$.}  Since $\d(x)=-\tfrac{1}{2}x^2+\l(1-g^2) \in \gp$, we obtain $1-g^2\in \gp$. As $\gp$ is prime, it follows that either $g-1\in \gp$ or $g+1\in \gp$. If $g-1\in \gp$, then $\phi_{+}, \phi_{-} \in \gp$, and hence $\Omega+4\l=\phi_{+}\phi_{-}g^{-1}\in \gp$. If $g+1\in \gp$, then $\psi_{+}, \psi_{-} \in \gp$, and therefore $\Omega-4\l=\psi_{+}\psi_{-}g^{-1}\in \gp$. In either case, $P \cap \mK[\Omega] \neq 0$.

	\textbf{Case 2: $x \notin \mathfrak{p}$.}  Since $\mK[g^{\pm 1}, x]=\mK[g^{\pm 1}, \Omega]\oplus \mK[g^{\pm 1}, \Omega]x$, the ring $\mK[g^{\pm 1}, x]$ is integral over $\mK[g^{\pm 1}, \Omega]$. As $\gp \neq 0$, the incomparability theorem (\cite[Corollary 4.18]{Eisenbud}) implies $\gp \cap \mK[g^{\pm 1}, \Omega] \neq 0$. 
	Choose a nonzero element $u=\sum_{i=0}^n r_i g^i \in \gp \cap \mK[g^{\pm 1}, \Omega]$ with $n$ minimal, where $r_i \in \mK[\Omega]$ and $r_n \neq 0$.  Note that for every $i \geq 1$, $[y, g^i]=-ix g^i.$
	Since $\gp$ is $\d$-stable, 
	\begin{equation*}
	[y, u]=-x\sum_{i=0}^n ir_i g^i \in \gp.  
	\end{equation*}
	Because $x\notin \gp$ and $\gp$ is prime, it follows that $u':=\sum_{i=0}^n ir_i g^i \in \gp$. Hence,
	\begin{equation*}
	w:=nu-u'=\sum_{i=0}^{n-1}(n-i)r_i g^i \in \gp. 
	\end{equation*}
	Since $w \in \mK[g^{\pm1},\Omega]$ and $\deg_g(w)<n$, the minimality of $n$ forces $n=0$. Thus $u=r_0 \in \gp \cap \mK[\Omega]$. Therefore, $P \cap \mK[\Omega] \neq 0$, as required. 
\end{proof}

\subsection{Some factor algebras}
For any $\alpha \in \mK$, consider the central quotient
\begin{equation*}
\mathfrak{U}^{\alpha}:=\mathfrak{U}/\langle \Omega-\alpha \rangle. 
\end{equation*}
The relation $\Omega=\alpha$ yields
$x^2=2\lambda g^2+\alpha g+2\lambda$. Then in $\mathfrak{U}^{\alpha}$ we have 
\begin{equation*}
[y,x]
=-\frac12x^2+\lambda(1-g^2)
=-2\lambda g^2-\frac12\alpha g.
\end{equation*}

Define the following elements in $\mathfrak{U}^{\alpha}$:
\begin{equation} \label{hef} 
h:=\frac{1}{\sqrt{2\l}}yg^{-1}, \qquad e:=x-\sqrt{2\l}\,g-\frac{\alpha}{2\sqrt{2\l}}, \qquad f:=x+\sqrt{2\l}\,g+\frac{\alpha}{2\sqrt{2\l}}. 
\end{equation}
A straightforward computation shows that
\begin{equation*}
\begin{gathered}\
[h, e]=e, \qquad [h, f]=-f, \qquad [e, f]=0, \\
ef=\frac{1}{8\l}(16\l^2-\alpha^2). 
\end{gathered}
\end{equation*}
Hence,  if $\alpha \neq \pm 4\l$, then $e$ and $f$ are invertible in $\mathfrak{U}^{\alpha}$.

Let $\CB$ denote the subalgebra of $\mathfrak{U}^{\alpha}$ generated by $h, e$, and $f$. Then $\CB$ admits the presentation
\begin{equation} \label{Bef}  
\CB \cong \frac{\mK[e,f]}{\langle ef-\frac{1}{8\l}(16\l^2-\alpha^2)\rangle}\Big[h;\partial\Big]
\end{equation}
where $\partial$ is the derivation determined by $\partial(e)=e$ and $\partial(f)=-f$.
In particular, $\CB$ is a domain if and only if $\alpha \neq \pm 4\l$.  
The following proposition shows that $\mathfrak{U}^{\alpha}$ is a localization of $\CB$.

\begin{proposition} \label{5Jun26}  
	Let $\mathfrak{U}=\mathfrak{U}(\l)$ with $\l \neq 0$. 
	\begin{enumerate}
		\item Suppose that $\alpha \neq \pm 4\l$. Then $\CB\cong \mK[e^{\pm 1}][h;\partial]$ where $\partial$ is the derivation of $\mK[e^{\pm 1}]$ determined by $\partial(e)=e$. Furthermore, the algebra $\mathfrak{U}^{\alpha}$ is isomorphic to the localization of $\CB$ at the powers of the element $\tilde{g}:=e^2+\frac{\alpha}{\sqrt{2\l}}e+\frac{1}{8\l}(\alpha^2-16\l^2)$. More precisely, 
		\begin{equation*}
		\mathfrak{U}^{\alpha} \cong \CB_{\tilde{g}} \cong \mK[e^{\pm 1}]_{\tilde{g}}[h;\partial].
		\end{equation*}
		In particular, $\mathfrak{U}^{\alpha}$ is a simple domain. 
		
		\item Suppose that $\alpha=\pm 4\l$. Then $\CB \cong \frac{\mK[e,f]}{\langle ef \rangle}\Big[h;\partial \Big]$ where $\partial$ is the derivation determined by $\partial(e)=e$ and $\partial(f)=-f$. Furthermore,  the algebra $\mathfrak{U}^{\alpha}$ is isomorphic to the localization of $\CB$ at the powers of the element $g':=e-f+\frac{\alpha}{\sqrt{2\l}}$. More precisely, 
		\begin{equation*}
		\mathfrak{U}^{\alpha} \cong \CB_{g'} \cong \Big(\frac{\mK[e,f]}{\langle ef \rangle}\Big)_{g'}\Big[h;\partial \Big]. 
		\end{equation*}
		In this case, $\mathfrak{U}^{\alpha}$ is neither simple nor a domain. 
	\end{enumerate}
\end{proposition}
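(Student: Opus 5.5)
The plan is to express $g$, $x$, $y$ inside $\CB$ after inverting a suitable element, and then compare presentations. From \eqref{hef} we have $f-e=2\sqrt{2\l}\,g+\frac{\alpha}{\sqrt{2\l}}$. Hence
\[
g=\frac{1}{2\sqrt{2\l}}\Big(f-e-\frac{\alpha}{\sqrt{2\l}}\Big), \qquad x=\tfrac12(e+f), \qquad y=\sqrt{2\l}\,hg .
\]
So $\mathfrak{U}^{\alpha}$ is generated by $\CB$ together with $g^{-1}$. Conversely $h,e,f$ lie in $\mathfrak{U}^{\alpha}$, which contains $g^{-1}$. Therefore $\mathfrak{U}^{\alpha}$ is the subalgebra generated by $\CB$ and the inverse of $g$.

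In case (1), $e$ is invertible with $f=c e^{-1}$, where $c=\frac{1}{8\l}(16\l^2-\alpha^2)\neq0$. This gives $\CB=\mK[e^{\pm1}][h;\partial]$. The quotient in \eqref{Bef} by $ef-c$ is $\mK[e^{\pm1}]$, and its elements are $\partial$-compatible. Multiplying $g$ by $-2\sqrt{2\l}\,e$ gives $e^2+\frac{\alpha}{\sqrt{2\l}}e-c=\tilde g$, so $\tilde g=-2\sqrt{2\l}\,eg$. Inverting $g$ is therefore the same as inverting $\tilde g$.

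In case (2), $c=0$, and $2\sqrt{2\l}\,g$ equals $f-e-\frac{\alpha}{\sqrt{2\l}}=-g'$, so inverting $g$ is inverting $g'$. In both cases $\tilde g$ (resp. $g'$) is normal, since $[h,g]=g$ can be checked from $[y,g]=-xg$. Hence the powers form an Ore set, and the localization of the Ore extension is again an Ore extension over the localized coefficient ring.

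To obtain the isomorphism $\CB_{\tilde g}\to\mathfrak{U}^{\alpha}$ rather than just a surjection, I would build the inverse map. Define $\mathfrak{U}\to\CB_{\tilde g}$ by sending $g$, $x$, $y$ to the formulas above. One checks the three defining relations. The first two, $gxg^{-1}=x$ and $gyg^{-1}=y+x$, become $[h,g]=g$ and commutativity of $e,f$. The third uses $[h,e]=e$, $[h,f]=-f$. One also checks that $\Omega\mapsto\alpha$, which reduces to the relation $ef=c$. This yields a map $\mathfrak{U}^{\alpha}\to\CB_{\tilde g}$ that is inverse to the natural one on generators.

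A concern is whether \eqref{Bef} is really a presentation of the subalgebra $\CB$, i.e. whether the natural map is injective. This could be handled by a basis or GK-dimension argument, or sidestepped by defining $\CB$ abstractly. The two-sided inverse construction above avoids needing injectivity separately.

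For simplicity in (1), consider $A=\mK[e^{\pm1}]_{\tilde g}$ with the derivation $\partial=e\,d/de$. This is a commutative Dedekind domain with $\partial$-simplicity: any nonzero $\partial$-stable ideal is generated by a polynomial $p$ with $p\mid \partial p$. Comparing degrees forces $p$ to be a monomial, which is a unit since $e$ is invertible. Therefore $A$ has no nontrivial $\partial$-stable ideals. The kernel of $\partial$ on $A$ is $\mK$; the derivation is not inner since $A$ is commutative and $\partial\neq0$. By the standard criterion \cite{Goodearl--Warfield} for $A[h;\partial]$ in characteristic zero, $\mathfrak{U}^{\alpha}$ is simple, and it is a domain as an Ore extension of a domain.

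In (2), $ef=0$ with $e,f\neq0$ in the localization gives zero divisors. These are nonzero because $g'$ is not nilpotent modulo $e$ (there $g'=-f+\text{const}$). The ideal generated by $e$ is proper, since modulo $e$ we get $\mK[f]_{g'}[h;\partial]\neq0$, and it is $\partial$-stable, so $\mathfrak{U}^{\alpha}$ is not simple.

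The main obstacle I expect is the bookkeeping in verifying the relations, especially $[y,x]$, under the substitution. This should be routine given $x^2=2\l g^2+\alpha g+2\l$.
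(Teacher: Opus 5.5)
Your overall strategy works, and for the isomorphism itself it is more rigorous than the paper's. The paper writes $x,g,y$ in terms of $e,f,h$ and observes $g=-\frac{1}{2\sqrt{2\l}}e^{-1}\tilde g$ (resp.\ $g=-\frac{1}{2\sqrt{2\l}}g'$). It then concludes $\mathfrak{U}^{\alpha}\subseteq\CB_{\tilde g}$ and calls the reverse inclusion immediate, tacitly working in a common overring and taking the presentation of $\CB$ for granted. Its simplicity argument is that a localization of the simple ring $\mK[e^{\pm1}][h;\partial]$ is simple. You instead build mutually inverse homomorphisms between $\mathfrak{U}^{\alpha}$ and $A_s[h;\partial]$ by checking the defining relations and $\Omega\mapsto\alpha$. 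You then prove simplicity from $\partial$-simplicity of $\mK[e^{\pm1}]_{\tilde g}$ together with the outer-derivation criterion. Both routes are fine: the paper's simplicity step is shorter, and yours is self-contained.

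One step, however, is wrong as written. You claim $[h,g]=g$, but in fact $[h,g]=\frac{1}{\sqrt{2\l}}\bigl(y-gyg^{-1}\bigr)=-\frac{x}{\sqrt{2\l}}=-\frac{1}{2\sqrt{2\l}}(e+f)$.

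First consequence: the relation $gyg^{-1}=y+x$ reduces not to $[h,g]=g$ but to $[h,e-f]=e+f$. This does follow from $[h,e]=e$ and $[h,f]=-f$, so your relation check survives once corrected.

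Second, and more seriously, $\tilde g$ and $g'$ are \emph{not} normal in $\CB$. In case (1), $\partial(\tilde g)=2e^2+\frac{\alpha}{\sqrt{2\l}}e\notin\tilde g\,\mK[e^{\pm1}]$, since $\tilde g$ is a quadratic with $\tilde g(0)\neq0$. In case (2), $\partial(g')=e+f$ is not a multiple of $g'$ on the component $f=0$. So your justification that the powers of $\tilde g$ (resp.\ $g'$) form an Ore set in $\CB$ fails. That Ore property is exactly what the middle isomorphism $\CB_{\tilde g}\cong\mK[e^{\pm1}]_{\tilde g}[h;\partial]$ requires.

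The conclusion is still true, for a different reason. Take $A$ commutative and $s\in A$ a non-zero-divisor. Every element of $A_s[h;\partial]$ can be written both as $s^{-n}b$ and as $b's^{-m}$ with $b,b'\in A[h;\partial]$: write it as $\sum r_ih^i$ or as $\sum h^ir'_i$ and clear denominators. Hence $\{s^n\}$ is a denominator set in $A[h;\partial]$ with localization $A_s[h;\partial]$; the basic instance is $hs^2=s\bigl(sh+2\partial(s)\bigr)$.

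Finally, to pass from your isomorphism to the stated description of the subalgebra $\CB$ in case (2), you need $g'$ to be a non-zero-divisor in $\mK[e,f]/\langle ef\rangle$. It is: it restricts to $e+\frac{\alpha}{\sqrt{2\l}}$ and $-f+\frac{\alpha}{\sqrt{2\l}}$ on the two components, and $\alpha\neq0$. Therefore $\bigl(\mK[e,f]/\langle ef\rangle\bigr)[h;\partial]$ embeds in its localization, and hence in $\mathfrak{U}^{\alpha}$.
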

\begin{proof}
	(1) Assume that $\alpha \neq \pm 4\l$. Then the elements $e$ and $f$ are invertible in $\CB$. It follows from \eqref{Bef} that $\CB \cong \mK[e^{\pm 1}][h;\partial]$, which is a simple domain. 
	By \eqref{hef}, 
	\begin{equation*}
	x=\frac{1}{2}(e+f), \qquad g=-\frac{1}{2\sqrt{2\l}}(e-f+\frac{\alpha}{\sqrt{2\l}}). 
	\end{equation*}
	Hence $x,g\in \CB$, and therefore $y=\sqrt{2\l}hg\in \CB$. Moreover, using $ef=\frac{1}{8\l}(\alpha^2-16\l^2)$, we obtain
	\begin{equation*}
	g=-\frac{1}{2\sqrt{2\l}}e^{-1}(e^2-ef+\frac{\alpha}{\sqrt{2\l}}e)= -\frac{1}{2\sqrt{2\l}}e^{-1} \tilde{g}. 
	\end{equation*}
	Since $e$ is invertible, it follows that $g^{-1}\in \CB_{\tilde{g}}$. Consequently,  $x, g^{\pm 1}, y\in \CB_{\tilde{g}}$, and hence $\mathfrak{U}^{\alpha} \subseteq \CB_{\tilde{g}}$. The reverse inclusion is immediate, so $\mathfrak{U}^{\alpha} = \CB_{\tilde{g}}$. Since $\CB$ is a simple domain,  its localization $\mathfrak{U}^{\alpha}$ is also a simple domain. 
	
	(2) Assume that $\alpha=\pm 4\lambda$. Then $\alpha^2=16\lambda^2$, and \eqref{Bef} yields $\CB \cong \frac{\mK[e,f]}{\langle ef \rangle}\Big[h;\partial \Big]$. As in the previous case, we have $x, g, y \in \CB$. Since $g=-\frac{1}{2\sqrt{2\l}} g'$, it follows that $g^{-1}\in \CB_{g'}$. Thus the generators $x, g^{\pm 1}, y$ belong to $\CB_{g'}$, whence $\mathfrak{U}^{\alpha} \subseteq \CB_{g'}$. The reverse inclusion is immediate, and thus $\mathfrak{U}^{\alpha}=\CB_{g'}$. Since $ef=0$, the algebra $\mathfrak{U}^{\alpha}$ is not a domain. Furthermore, the ideals generated by the normal elements $e$ and $f$ are proper nonzero ideals, so $\mathfrak{U}^{\alpha}$ is not simple.
\end{proof}

Recall that the elements $\phi_{\pm}=x \pm \sqrt{2\l}(g-1)$ and $\psi_{\pm}=x\pm \sqrt{2\l}(g+1)$ are normal in $\mathfrak{U}$. Hence the corresponding principal left/right ideals are two-sided, and the corresponding factor algebras are obtained by imposing the relations
\begin{equation*}
\begin{aligned}
 \phi_{+}=0 \quad  &\Rightarrow \quad  x=-\sqrt{2\l}(g-1),\\
 \phi_{-}=0 \quad  &\Rightarrow \quad  x=\sqrt{2\l}(g-1), \\
 \psi_{+}=0 \quad  &\Rightarrow \quad  x=-\sqrt{2\l}(g+1), \\
 \psi_{-}=0 \quad  &\Rightarrow \quad  x=\sqrt{2\l}(g+1).
\end{aligned}
\end{equation*}

The next proposition identifies the factor algebras corresponding to the normal elements $\phi_{\pm}$ and $\psi_{\pm}$ as Ore extensions over the Laurent polynomial algebra $\mK[g^{\pm1}]$.
\begin{proposition} \label{2Jun26}   
 	Each of the algebras $\mathfrak{U}/\phi_{\pm}\mathfrak{U}$ and $\mathfrak{U}/\psi_{\pm}\mathfrak{U}$ is an Ore extension over the Laurent polynomial algebra $\mK[g^{\pm 1}]$. More precisely, 
	\begin{enumerate}
		\item For the $\phi$-quotients, 
		\begin{equation*}
		\begin{aligned}
			\mathfrak{U}/\phi_{+}\mathfrak{U} &\cong \mK[g^{\pm 1}][y;\d_{+}], \qquad \text{where } \,\,\d_{+}(g)= \sqrt{2\l}\,g(g-1),  \\
		\mathfrak{U}/\phi_{-}\mathfrak{U} &\cong \mK[g^{\pm 1}][y;\d_{-}], \qquad  \text{where }\,\,\d_{-}(g)=-\sqrt{2\l} \,g(g-1). 
		\end{aligned}
		\end{equation*}
		Furthermore, $\mathfrak{U}/\phi_{+}\mathfrak{U}$ and $\mathfrak{U}/\phi_{-}\mathfrak{U}$ are isomorphic via $y \mapsto -y$. 
		\item For the $\psi$-quotients, 
		\begin{equation*}
		\begin{aligned}
		\mathfrak{U}/\psi_{+}\mathfrak{U} &\cong \mK[g^{\pm 1}][y;\d'_{+}], \qquad \text{where } \,\,\d'_{+}(g)= \sqrt{2\l}\,g(g+1),  \\
		\mathfrak{U}/\psi_{-}\mathfrak{U} &\cong \mK[g^{\pm 1}][y;\d'_{-}], \qquad  \text{where }\,\,\d'_{-}(g)=-\sqrt{2\l} \,g(g+1). 
		\end{aligned}
		\end{equation*}
		Furthermore, $\mathfrak{U}/\psi_{+}\mathfrak{U}$ and $\mathfrak{U}/\psi_{-}\mathfrak{U}$ are isomorphic via $y \mapsto -y$. 
	\end{enumerate}
\end{proposition}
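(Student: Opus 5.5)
We treat $\mathfrak{U}/\phi_{+}\mathfrak{U}$ in detail; the other three quotients are handled the same way. The first step is to use the Ore extension presentation \eqref{OreJ}, $\mathfrak{U}=\mK[g^{\pm1},x][y;\d]$, together with a change of variables in the coefficient ring. Set $R=\mK[g^{\pm1},x]$. Since $\phi_{+}=x+\sqrt{2\l}(g-1)$ is linear in $x$ with unit coefficient, we have $R=\mK[g^{\pm1},\phi_{+}]$, a polynomial ring in $\phi_{+}$ over $\mK[g^{\pm1}]$. From the computed bracket $[y,\phi_{+}]=-\tfrac12\phi_{+}\psi_{+}$, we get $\d(\phi_{+})\in\phi_{+}R$. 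So $\phi_{+}R$ is a $\d$-stable ideal of $R$, and
\[
R/\phi_{+}R\cong\mK[g^{\pm1}].
\]

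The second step is the standard fact about Ore extensions. If $I$ is a $\d$-stable ideal of $R$, then $I[y;\d]=IS$ (where $S=R[y;\d]$) is a two-sided ideal, and $S/IS\cong (R/I)[y;\bar\d]$. Because $\phi_{+}$ is normal, $\phi_{+}\mathfrak{U}$ is a two-sided ideal. Using the basis of monomials $r y^i$, it equals $\bigoplus_i \phi_{+}R\,y^i$: indeed $\phi_{+}ry^i\in\phi_{+}Ry^i$, and conversely $\phi_{+}Ry^i\subseteq\phi_{+}\mathfrak{U}$. This gives
\[
\mathfrak{U}/\phi_{+}\mathfrak{U}\cong\mK[g^{\pm1}][y;\d_{+}],
\]
where $\d_{+}$ is the induced derivation. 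It is determined by $\d_{+}(g)=\overline{\d(g)}=\overline{-xg}$. Substituting $x\equiv-\sqrt{2\l}(g-1)$ gives $\d_{+}(g)=\sqrt{2\l}\,g(g-1)$. Replacing $\sqrt{2\l}$ by $-\sqrt{2\l}$ throughout gives $\d_{-}(g)=-\sqrt{2\l}\,g(g-1)$ for $\phi_{-}$. The substitutions $x\equiv\mp\sqrt{2\l}(g+1)$ give $\d'_{\pm}(g)=\pm\sqrt{2\l}\,g(g+1)$ for $\psi_{\pm}$, whose $\d$-stability follows from the listed brackets $[y,\psi_{\pm}]$.

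The final step is the isomorphisms. In $\mK[g^{\pm1}][y;\d_{+}]$ the defining relation is $yg-gy=\sqrt{2\l}g(g-1)$. The assignment $g\mapsto g$, $y\mapsto -y$ sends this relation to
\[
-(yg-gy)=\sqrt{2\l}g(g-1),
\]
that is, $yg-gy=-\sqrt{2\l}g(g-1)$, which is exactly the relation of $\mK[g^{\pm1}][y;\d_{-}]$. By the universal property of Ore extensions this defines a homomorphism; it is an involution up to the sign convention, hence an isomorphism. The $\psi$ case is identical.

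The only point requiring care is the identification $\phi_{+}\mathfrak{U}=\phi_{+}R[y;\d]$ in the second step, that is, that the ideal generated by $\phi_{+}$ is spanned by $\phi_{+}R\,y^i$. This relies on normality of $\phi_{+}$ and on the $\d$-stability coming from the bracket identities. Everything else is direct substitution.
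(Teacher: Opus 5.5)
Your proof is correct and is essentially the paper's argument: both reduce modulo $\phi_{+}$ by substituting $x\equiv-\sqrt{2\lambda}(g-1)$ into $[y,g]=-xg$, handle the other three quotients by the analogous substitutions, and obtain the isomorphisms from $y\mapsto -y$. The only difference is packaging. The paper eliminates $x$ from the presentation and checks that the relation $[y,x]=-\tfrac12x^2+\lambda(1-g^2)$ becomes redundant. You instead invoke the standard fact that an Ore extension modulo the extension of a $\delta$-stable ideal (here $\phi_{+}R$, stable because $[y,\phi_{+}]=-\tfrac12\phi_{+}\psi_{+}$) is an Ore extension over the quotient ring, which has the mild advantage of exhibiting the basis of the quotient directly.
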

\begin{proof}
	Consider first the quotient $\mathfrak{U}/\phi_{+}\mathfrak{U}$. Substituting
	$x=-\sqrt{2\lambda}(g-1)$
	into the relation
	$[y,g]=-xg$
	gives
	$[y,g]=\sqrt{2\lambda}\,g(g-1).$
	Hence $\mathfrak{U}/\phi_{+}\mathfrak{U}$ is generated by $g^{\pm1}$ and $y$ subject to the single relation
	$[y,g]=\delta_{+}(g),$
	where $\delta_{+}(g)=\sqrt{2\lambda}\,g(g-1)$. It follows that
	$\mathfrak{U}/\phi_{+}\mathfrak{U}\cong\mK[g^{\pm1}][y;\delta_{+}].$	
	The remaining cases are analogous. Substituting the corresponding expressions for $x$ yields the stated derivations $\delta_{-}$, $\delta'_{+}$, and $\delta'_{-}$. In each case, the relation
	$[y,x]=-\frac12x^2+\lambda(1-g^2)$
	is automatically satisfied after substitution, so no further relations arise.
	
	Finally, replacing $y$ by $-y$ sends $\delta_{+}$ to $\delta_{-}$ and $\delta'_{+}$ to $\delta'_{-}$, yielding the asserted isomorphisms.
\end{proof}


\begin{remark}
	The elements $\phi_{\pm}$ and $\psi_{\pm}$ are $(g,1)$-primitive:
	\begin{equation*}
	\D(\phi_{\pm})=\phi_{\pm} \otimes 1+ g\otimes \phi_{\pm}, \qquad \D(\psi_{\pm})=\psi_{\pm} \otimes 1+ g\otimes \psi_{\pm}. 
	\end{equation*}
	It follows that the ideals $\phi_{\pm}\mathfrak{U}$ and $\psi_{\pm}\mathfrak{U}$ are Hopf ideals of $\mathfrak{U}$. Consequently, the quotient algebras $\mathfrak{U}/\phi_{\pm}\mathfrak{U}$ and $\mathfrak{U}/\psi_{\pm}\mathfrak{U}$ inherit Hopf algebra structures from $\mathfrak{U}$. In particular, the skew polynomial algebras described in Proposition~\ref{2Jun26} are Hopf algebras  in which $g$ is grouplike and $y$ is $(g,1)$-primitive.
\end{remark}

\begin{lemma} \label{3Jun26}  
	The prime spectra of the algebras $\mathfrak{U}/\phi_{\pm}\mathfrak{U}$ and $\mathfrak{U}/\psi_{\pm}\mathfrak{U}$ are given by 
	\begin{equation*}
	\begin{aligned}
		\Spec(\mathfrak{U}/\phi_{\pm}\mathfrak{U})=\{ \langle 0 \rangle, \,\langle g-1 \rangle \} \,\, \cup \,\, \{ \langle g-1, y-\gamma \rangle \mid \gamma \in \mK \},  \\
		\Spec(\mathfrak{U}/\psi_{\pm}\mathfrak{U})=\{ \langle 0 \rangle,\, \langle g+1 \rangle \} \,\, \cup \,\, \{ \langle g+1, y-\gamma \rangle \mid \gamma \in \mK \}.  
	\end{aligned}
	\end{equation*}
\end{lemma}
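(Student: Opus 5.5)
By Proposition~\ref{2Jun26} and the isomorphisms $y\mapsto -y$, it suffices to treat $A:=\mK[g^{\pm1}][y;\d]$ with $\d(g)=c\,g(g-\epsilon)$, where $c=\sqrt{2\l}\neq 0$ and $\epsilon=1$ (the $\phi$-case) or $\epsilon=-1$ (the $\psi$-case, using $\d'_{+}$). The plan is to split $\Spec(A)$ according to whether a prime contains the element $g-\epsilon$. Note first that $g-\epsilon$ is normal in $A$, since
\begin{equation*}
[y,g-\epsilon]=c\,g(g-\epsilon)=(g-\epsilon)\,cg.
\end{equation*}
Hence $(g-\epsilon)A$ is a two-sided ideal, and $A/(g-\epsilon)A\cong \mK[y]$ is commutative. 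The primes containing $g-\epsilon$ are therefore exactly $\langle g-\epsilon\rangle$ and $\langle g-\epsilon, y-\gamma\rangle$ for $\gamma\in\mK$, using that $\mK$ is algebraically closed.

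For primes $P$ not containing $g-\epsilon$, I would use \cite[Lemmas 3.19 and 3.21]{Goodearl--Warfield}, as in Lemma~\ref{1Jun26}. They give that $\gp:=P\cap \mK[g^{\pm1}]$ is a $\d$-stable prime of $\mK[g^{\pm1}]$. If $\gp\neq 0$, then $\gp=(g-\beta)$ for some $\beta\in\mK^*$. The $\d$-stability condition $\d(g-\beta)\in\gp$ forces $c\beta(\beta-\epsilon)=0$, so $\beta=\epsilon$. This contradicts $g-\epsilon\notin P$. Hence $\gp=0$.

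It then remains to show that every nonzero ideal $I$ of $A$ meets $\mK[g^{\pm1}]$ nontrivially, which gives $P=0$. This is the main step. Suppose $I\neq 0$, and take $a=\sum_{i=0}^n r_iy^i\in I$ with $n$ minimal and $r_n\neq 0$, where $r_i\in\mK[g^{\pm1}]$. Compute
\begin{equation*}
[a,g]=\sum_{i=0}^n r_i[y^i,g],
\end{equation*}
whose $y^{n-1}$ coefficient is $n r_n\d(g)$. Then form
\begin{equation*}
\d(g)\,a\,y-\ldots
\end{equation*}
or, more cleanly, the element $n\d(g)a - [a,g]\,y$ adjusted by left multiples of $r_n$. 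Since we work in a domain, a suitable combination of the form $r_n\d(g)\,(\cdot)$ kills the leading term and has lower $y$-degree, so it lies in $I$ and must vanish by minimality. Iterating this should force $n=0$.

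If that bookkeeping becomes messy, an alternative route is to localize at the powers of the normal element $g-\epsilon$. In $A_{g-\epsilon}$, the element $u:=g(g-\epsilon)^{-1}$ satisfies $[y,u]=-c\epsilon\,u$ up to a computation. This identifies $A_{g-\epsilon}$ with $\mK[u^{\pm1}][y;\d]$ with $\d(u)=\mu u$ and $\mu\neq 0$, after checking that $g^{\pm1}$ is recovered from $u$ and $(g-\epsilon)^{-1}$. Such an algebra is simple in characteristic zero, since it is the same type of algebra as in \eqref{Jalph}. Since $g-\epsilon$ is normal and $P\not\ni g-\epsilon$, $P$ extends to a proper prime of the simple ring $A_{g-\epsilon}$, hence $P=0$. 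I expect the precise identification of $A_{g-\epsilon}$ (namely, which elements to invert so that the generator is an eigenvector of $\operatorname{ad}y$) to be the main obstacle. A natural choice is $v:=(g-\epsilon)g^{-1}=1-\epsilon g^{-1}$, giving
\begin{equation*}
[y,v]=\epsilon g^{-2}\d(g)=\epsilon c\,v,
\end{equation*}
while $\mK[g^{\pm1}]_{g-\epsilon}=\mK[v^{\pm1},(1-v)^{-1}]$. The localization is then $\mK[v^{\pm1}]_{1-v}[y;\d]$ with $\d(v)=\epsilon c\,v$. It has no nonzero $\d$-stable proper ideals in its coefficient ring, because $1-v$ is not an eigenvector. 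Therefore it is simple by the standard criterion for differential operator rings in characteristic zero.

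Finally, $0$ is prime because $A$ is a domain, and combining the two cases gives the stated lists.
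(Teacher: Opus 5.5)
Your localization route is correct and is essentially the paper's proof: the paper also inverts the normal element $g-1$ and uses simplicity of the localization to conclude that every nonzero prime contains $g-1$, then reads off the remaining primes from the quotient $\mK[y]$; the only cosmetic difference is that the paper takes $y'=\frac{1}{\sqrt{2\l}}yg^{-1}(g-1)^{-1}$ with $[y',g]=1$ to recognize a localized Weyl algebra, whereas you use the $\operatorname{ad}y$-eigenvector $v$ (and the coefficient ring is $\d$-simple simply because $\mK[v^{\pm1}]$ already is and $\d$-simplicity passes to localizations). Your first route also works, and more simply than you feared: for $a\in I$ of minimal $y$-degree $n\geq 1$, the commutator $[a,g]=ag-ga\in I$ is already a nonzero element of degree $n-1$, because its leading coefficient $nr_n\d(g)$ is nonzero, so no further combinations are needed.
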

\begin{proof}
	Let $\bar{\mathfrak{U}}:=\mathfrak{U}/\phi_{{+}}\mathfrak{U}$. By Proposition \ref{2Jun26}(1), we have $\bar{\mathfrak{U}}=\mK[g^{\pm 1}][y;\d_{+}]$ where $\d_{+}(g)= \sqrt{2\l}\,g(g-1)$. In particular, $\bar{\mathfrak{U}}$ is a domain, so $\langle 0 \rangle$ is a prime ideal. Note that the element $g-1$ is normal in $\bar{\mathfrak{U}}$. Let $\bar{\mathfrak{U}}_{g-1}$ denote the localization of $\bar{\mathfrak{U}}$ at the powers of $g-1$, and set $y':=\frac{1}{\sqrt{2\l}}yg^{-1}(g-1)^{-1}$. A direct computation gives $[y',g]=1$. Hence $\bar{\mathfrak{U}}_{g-1} \cong \mK[g^{\pm 1}][y'; \d']$ where $\d'(g)=1$. Therefore $\bar{\mathfrak{U}}_{g-1}$ is a localization of the first Weyl algebra and is consequently simple. As $g-1$ is normal, it follows that every nonzero prime ideal of $\bar{\mathfrak{U}}$ contains $g-1$. Since $\bar{\mathfrak{U}}/(g-1)\bar{\mathfrak{U}}\cong \mK[y]$ and $\mK$ is algebraically closed, the prime ideals of $\bar{\mathfrak{U}}$ containing $g-1$ are precisely  $\{\langle g-1 \rangle \} \,\, \cup \,\, \{ \langle g-1, y-\gamma \rangle \mid \gamma \in \mK \}$. This gives the description of $\Spec(\bar{\mathfrak{U}})$. By Proposition \ref{2Jun26}(1), the algebras $\mathfrak{U}/\phi_{+}\mathfrak{U}$ and $\mathfrak{U}/\phi_{-}\mathfrak{U}$ are isomorphic, so the same description holds for $\mathfrak{U}/\phi_{-}\mathfrak{U}$. 
	
	The description of $\Spec(\mathfrak{U}/\psi_{\pm}\mathfrak{U})$ is obtained similarly, replacing $g-1$ by $g+1$. 
 \end{proof}

\subsection{The prime and primitive spectra}
The following theorem gives an explicit description of the prime, maximal, primitive, and completely prime ideals of $\mathfrak{U}(\l)$ with $\l \neq 0$. 
\begin{theorem} \label{A3Jun26} 
	Let $\mathfrak{U}=\mathfrak{U}(\l)$ with $\l \neq 0$. 
	\begin{enumerate}
		\item The prime spectrum of $\mathfrak{U}$ is 
		\begin{equation*}
		\begin{aligned}
				\Spec(\mathfrak{U})= \{ \langle 0 \rangle \} \,\,\cup &\,\, \{ \langle \phi_{+}\rangle,\, \langle \phi_{-} \rangle, \,\langle \psi_{+} \rangle,\, \langle \psi_{-} \rangle \} \,\,\cup \,\, \{ \langle \Omega-\alpha \rangle \mid \alpha \in \mK \setminus\{ \pm 4\l\}\}\\
				\cup& \,\,\{ \langle x, g-1 \rangle, \,\, \langle x, g+1 \rangle \}\\  \cup& \,\, \{ \langle x, g-1, y-\gamma \rangle \mid \gamma \in \mK  \} \,\, \cup \,\, \{ \langle x, g+1, y-\gamma \rangle \mid \gamma \in \mK  \}.
		\end{aligned}
		\end{equation*}
		The inclusions among these prime ideals are depicted in the following diagram:		
		\begin{equation*}
		\begin{tikzpicture}[scale=0.6]
		
		\node (0) at (0,0) {$\langle0\rangle$};
		
		\node (p1) at (-3,1.8) {$\langle\phi_+\rangle$};
		\node (p2) at (-1,1.8) {$\langle\phi_-\rangle$};
		\node (p3) at ( 1,1.8) {$\langle\psi_+\rangle$};
		\node (p4) at ( 3,1.8) {$\langle\psi_-\rangle$};
		
		\node (q1) at (-2,3.6) {$\langle x,g-1\rangle$};
		\node (q2) at ( 2,3.6) {$\langle x,g+1\rangle$};
		
		\node (m1) at (-2,5.4)
		{$\{\langle x,g-1,y-\gamma\rangle\mid \gamma \in\mK\}\qquad\qquad\quad$};
		
		\node (m2) at ( 2,5.4)
		{$\qquad \qquad \quad \{\langle x,g+1,y-\gamma\rangle\mid \gamma\in\mK\}$};
		
		\node (o) at (7,1.8)
		{$\qquad \{\langle\Omega-\alpha\rangle
			\mid \alpha\in\mK\setminus\{\pm4\lambda\}\}$};
		
		\draw (0)--(p1);
		\draw (0)--(p2);
		\draw (0)--(p3);
		\draw (0)--(p4);
		\draw (0)--(o);
		
		\draw (p1)--(q1);
		\draw (p2)--(q1);
		
		\draw (p3)--(q2);
		\draw (p4)--(q2);
		
		\draw (q1)--(m1);
		\draw (q2)--(m2);
		
		\end{tikzpicture}
		\end{equation*}
		
		\item The maximal spectrum of $\mathfrak{U}$ is 
		\begin{equation*}
		\Max(\mathfrak{U})= \{ \langle \Omega-\alpha \rangle \mid \alpha \in \mK \setminus\{ \pm 4\l\}\}\,\cup \,\, \{\langle x, g\pm 1, y-\gamma \rangle \mid \gamma \in \mK  \}. 
		\end{equation*}
		
		\item The primitive spectrum of $\mathfrak{U}$ is
		\begin{equation*}
		\begin{aligned}
		\Prim(\mathfrak{U})= &\{ \langle \phi_{+}\rangle, \langle \phi_{-} \rangle, \langle \psi_{+} \rangle, \langle \psi_{-} \rangle \} \,\,\cup \,\, \{ \langle \Omega-\alpha \rangle \mid \alpha \in \mK \setminus\{ \pm 4\l\}\}\\
		&\cup \,\, \{ \langle x, g-1, y-\gamma \rangle \mid \gamma \in \mK  \} \,\, \cup \,\, \{ \langle x, g+1, y-\gamma \rangle \mid \gamma \in \mK  \}.
		\end{aligned}
		\end{equation*}
		
			\item Every prime ideal of $\mathfrak{U}$ is completely prime.
		
		\item Every nonzero ideal of $\mathfrak{U}$ intersects the centre $Z(\mathfrak{U})$ nontrivially.
	\end{enumerate}
\end{theorem}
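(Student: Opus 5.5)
We organize the proof around Lemma~\ref{1Jun26}: every nonzero prime $P$ meets $\mK[\Omega]$. Since $\mK$ is algebraically closed and $\mK[\Omega]\cap P$ is a prime ideal of $\mK[\Omega]$, nonzero here, it equals $(\Omega-\alpha)\mK[\Omega]$ for a unique $\alpha\in\mK$. So every nonzero prime contains $\Omega-\alpha$, and we split into the generic case $\alpha\neq\pm4\l$ and the exceptional cases $\alpha=\pm4\l$.

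\textbf{Generic case.} If $\alpha\neq\pm4\l$, Proposition~\ref{5Jun26}(1) says $\mathfrak{U}^\alpha$ is a simple domain. Hence $P=\langle\Omega-\alpha\rangle$, and this ideal is maximal and completely prime.

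\textbf{Exceptional case $\alpha=-4\l$.} Here $\Omega+4\l=\phi_+\phi_-g^{-1}\in P$, with $\phi_\pm$ normal and $g$ a unit. Primeness gives $\phi_+\in P$ or $\phi_-\in P$, so $P$ corresponds to a prime of $\mathfrak{U}/\phi_\pm\mathfrak{U}$. Lemma~\ref{3Jun26} lists these. Lifting:
\begin{itemize}
\item $\langle0\rangle$ lifts to $\langle\phi_\pm\rangle$;
\item $\langle g-1\rangle$ lifts to $\langle\phi_\pm,g-1\rangle=\langle x,g-1\rangle$, since $\phi_\pm\equiv x$ modulo $g-1$;
\item $\langle g-1,y-\gamma\rangle$ lifts to $\langle x,g-1,y-\gamma\rangle$.
\end{itemize}

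\textbf{Exceptional case $\alpha=4\l$.} The same argument with $\psi_\pm$ and $g+1$ gives $\langle\psi_\pm\rangle$, $\langle x,g+1\rangle$, and $\langle x,g+1,y-\gamma\rangle$.

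The four ideals $\langle\phi_\pm\rangle,\langle\psi_\pm\rangle$ are pairwise distinct. For example, $\phi_+-\phi_-=2\sqrt{2\l}(g-1)$, and $g-1\notin\langle\phi_+\rangle$ because the quotient is $\mK[g^{\pm1}][y;\d_+]$. The ideal $\langle0\rangle$ is prime because $\mathfrak{U}$ is a domain.

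\textbf{Part (1).} This assembles the list above. The inclusions in the diagram are read off from the liftings.

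\textbf{Part (4).} All quotients are domains: Proposition~\ref{2Jun26} for the Ore extensions, $\mK[y]$ for $\langle x,g\pm1\rangle$, and $\mK$ for the maximal ones. Together with the generic case, every prime is completely prime.

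\textbf{Part (5).} Repeat the argument of Theorem~\ref{a26May26}(4). A nonzero ideal contains a product of nonzero primes, each of which meets $\mK[\Omega]$ by Lemma~\ref{1Jun26}.

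\textbf{Part (2).} Maximality is read off from the inclusion diagram. The ideals $\langle\Omega-\alpha\rangle$ with $\alpha\neq\pm4\l$ are maximal because $\mathfrak{U}^\alpha$ is simple. The ideals $\langle x,g\pm1,y-\gamma\rangle$ are maximal with codimension one. Nothing else is maximal.

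\textbf{Part (3).} Maximal ideals are primitive. It remains to show:
\begin{itemize}
\item $\langle0\rangle$ is not primitive;
\item $\langle x,g\pm1\rangle$ are not primitive;
\item $\langle\phi_\pm\rangle,\langle\psi_\pm\rangle$ are primitive.
\end{itemize}

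For $\langle0\rangle$: the centre $\mK[\Omega]$ of a primitive Noetherian algebra of countable dimension over an algebraically closed field must be $\mK$, by Schur's lemma. Alternatively, $\langle0\rangle$ is not locally closed, since the intersection of the nonzero primes is zero: $\bigcap_\alpha\langle\Omega-\alpha\rangle=0$.

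For $\langle x,g\pm1\rangle$: the quotient $\mK[y]$ is commutative and not a field, so this ideal is not primitive.

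For $\langle\phi_+\rangle$: this is the main step. It suffices to show $A=\mK[g^{\pm1}][y;\d_+]$ is primitive. Its nonzero primes all contain the normal element $g-1$, so $\langle0\rangle$ is locally closed in $\Spec A$. The Dixmier--Moeglin-type criterion for Noetherian algebras of countable dimension — locally closed implies primitive, via the fact that these are Jacobson algebras satisfying the Nullstellensatz — then gives primitivity.

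If one wants an explicit module instead, I would try $M=A/A(y-\gamma')\cong\mK[g^{\pm1}]$, with $y$ acting by $\gamma'+\d_+$ up to a twist. Alternatively, take a suitable $\mK[g^{\pm1}]$-module on which $g-1$ acts invertibly. One then checks simplicity by showing that any nonzero submodule is stable under the simple localization $A_{g-1}$, a localized Weyl algebra.

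The cases $\langle\phi_-\rangle$ and $\langle\psi_\pm\rangle$ follow from the isomorphisms in Proposition~\ref{2Jun26}. I expect the primitivity of $\langle\phi_\pm\rangle,\langle\psi_\pm\rangle$ to be the main obstacle. I would rely on the locally closed $\Rightarrow$ primitive criterion, noting that it applies since these quotients are Noetherian of countable dimension.
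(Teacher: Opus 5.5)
Your proposal is correct and follows the paper's proof essentially step for step. Lemma~\ref{1Jun26} gives $\Omega-\alpha\in P$, Proposition~\ref{5Jun26}(1) settles $\alpha\neq\pm4\l$, and the factorizations $\Omega\pm4\l$ into normal elements together with Lemma~\ref{3Jun26} settle $\alpha=\pm4\l$. Primitivity of $\langle\phi_\pm\rangle,\langle\psi_\pm\rangle$ comes, as in the paper, from these primes being locally closed plus the Jacobson property, though the paper justifies that property by McConnell--Robson 9.1.2/9.1.8 rather than by countable dimension, which does not suffice over a countable field.

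Drop your two optional asides. First, ``$\langle0\rangle$ is not locally closed'' does not by itself exclude primitivity, since primitive $\Rightarrow$ locally closed is not a general fact; your Schur-lemma argument is enough. Second, $A/A(y-\gamma')$ is not simple, because $(g-1)\mK[g^{\pm1}]$ is $\d_+$-stable and so gives a proper nonzero submodule. The paper's explicit module is $\mathfrak{U}/\mathfrak{U}(\phi_+,g-\mu)$ with $\mu\neq0,1$.
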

\begin{proof}
	(1) Since $\mathfrak{U}$ is a domain, $\langle 0 \rangle$ is a completely prime ideal. Let $P$ be a nonzero prime ideal of $\mathfrak{U}$. By Lemma \ref{1Jun26}, we have $P \cap \mK[\Omega] \neq 0$. Since $\mK$ is algebraically closed,
	there exists $\alpha \in \mK$ such that $\Omega - \alpha \in P$.
	
	 If $\alpha \neq \pm 4 \l$, then by Proposition \ref{5Jun26}(1), the ideal $\langle \Omega-\alpha \rangle$ is maximal, hence $P=\langle \Omega-\alpha \rangle$. 
	 
	 If $\alpha=-4\l$ then $\Omega+4\l=\phi_{+}\phi_{-}g^{-1} \in P$. Since $\phi_{+}$ and $\phi_{-}$ are normal elements, primeness implies that $\phi_{+} \in P$ or $\phi_{-}\in P$. Prime ideals of $\mathfrak{U}$ containing $\phi_{\pm}$  correspond bijectively to prime ideals of $\mathfrak{U}/\phi_{\pm} \mathfrak{U}$. Moreover, $\langle \phi_{\pm}, g-1 \rangle=\langle x, g-1\rangle$.  Thus by Lemma \ref{3Jun26},  the prime ideals of $\mathfrak{U}$ containing $\Omega+4\l$ are 
	 \begin{equation*}
	 \{ \langle \phi_{+} \rangle, \langle \phi_{-} \rangle \}\,\,\cup \,\, \{ \langle x, g-1 \rangle \}\,\, \cup \,\, \{ \langle x, g-1, y-\gamma \rangle \mid \gamma \in \mK \}. 
	 \end{equation*}
	 
	 Similarly, if $\alpha=4\l$ then $\Omega-4\l=\psi_{+}\psi_{-}g^{-1}\in P$. Since $\psi_{+}$ and $\psi_{-}$ are normal, either $\psi_{+}\in P$ or $\psi_{-}\in P$. 
	 Prime ideals of $\mathfrak{U}$ containing $\psi_{\pm }$ correspond bijectively to prime ideals of $\mathfrak{U}/\psi_{\pm}\mathfrak{U}$.  Furthermore, $\langle \psi_{\pm}, g+1  \rangle=\langle x, g+1\rangle$.  Thus by Lemma \ref{3Jun26},   prime ideals of $\mathfrak{U}$ containing $\Omega-4\l$ are 
	 \begin{equation*}
	 \{ \langle \psi_{+} \rangle, \langle \psi_{-} \rangle \}\,\,\cup \,\, \{ \langle x, g+1 \rangle \}\,\, \cup \,\, \{ \langle x, g+1, y-\gamma \rangle \mid \gamma \in \mK \}. 
	 \end{equation*}
	 Combining all cases gives the description of $\Spec(\mathfrak{U})$.  All prime ideals are shown in the diagram, and the inclusions are obvious. 
	 
	 (2) The maximal ideals are precisely the maximal elements in the above poset, giving the stated description.
	 
	 (3) All maximal ideals are primitive. By \cite[Lemma 9.1.2 and Corollary 9.1.8]{MR}, $\mathfrak{U}$ is a Jacobson ring satisfying the Nullstellensatz over $\mK$. Hence every prime is an intersection of primitive ideals. From the inclusions of primes in statement (1), it follows that $\langle \phi_{\pm} \rangle$ and $\langle \psi_{\pm}  \rangle$ are primitive. The ideal $\langle 0 \rangle$ is not primitive since $\mathfrak{U}$ has nontrivial centre. The ideals $\langle x, g\pm 1\rangle$ are not primitive because $\mathfrak{U}/\langle x, g \pm 1 \rangle \cong \mK[y]$, which is commutative.
	 
	 (4) For all listed primes, the corresponding quotient algebras are domains,
	 by Proposition \ref{5Jun26}, Proposition \ref{2Jun26}, or direct inspection.
	 
	 (5) Let $\ga \neq 0$ be an ideal of $\mathfrak{U}$. Since $\mathfrak{U}$ is Noetherian, there exists prime ideals $P_1,\ldots, P_n$ such that $P_1 \cdots P_n \subseteq \ga$. By Lemma \ref{1Jun26}, every nonzero prime  ideal intersects $Z(\mathfrak{U})$ nontrivially,  hence so does $\ga$. 
\end{proof}

\subsection{Simple modules}
Having determined the primitive spectrum of $\mathfrak{U}(\l)$, we now construct explicit simple modules realizing these primitive ideals. 
More precisely, for each primitive ideal $P$ of $\mathfrak{U}(\l)$, we  exhibit a naturally defined simple $\mathfrak{U}(\l)$-module whose annihilator is precisely $P$. This provides concrete representatives of the simple modules associated with the primitive spectrum of $\mathfrak{U}(\l)$.

We begin by classifying the finite-dimensional simple $\mathfrak{U}(\lambda)$-modules, whose annihilators are the codimension-one primitive ideals.
\begin{corollary}
	A complete set of pairwise non-isomorphic finite-dimensional simple $\mathfrak{U}$-module is
	\begin{equation*}
	\{ [T_{+}(\gamma)] \mid \gamma \in\mK \} \,\, \cup \,\, \{ [T_{-}(\gamma)] \mid  \gamma \in \mK \}, \qquad \text{where } \,\, T_{\pm}(\gamma):=\mathfrak{U}/\langle x,\, g\mp 1, \,y-\gamma \rangle. 
	\end{equation*}
	Moreover, $\ann_\mathfrak{U}(T_{\pm}(\gamma))=\langle x,\, g\mp 1,\, y-\gamma \rangle.$
	In particular, every finite-dimensional simple $\mathfrak{U}$-module is one-dimensional.
\end{corollary}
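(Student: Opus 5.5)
The plan is to run the same argument as in Corollary~\ref{SimMod}(1), now using the primitive spectrum from Theorem~\ref{A3Jun26}(3). The proof has three steps.

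\textbf{Step 1: the modules $T_{\pm}(\gamma)$ are one-dimensional and simple.} In $\mathfrak{U}/\langle x, g\mp 1\rangle$ we have $x=0$ and $g=\pm 1$, so the defining relations collapse and the quotient is $\mK[y]$. This was already observed in the proof of Theorem~\ref{A3Jun26}(3). Before relying on it, I will check that the ideal $\langle x,g\mp1,y-\gamma\rangle$ is proper. The relation $[y,x]=-\tfrac12x^2+\l(1-g^2)$ becomes $0=\l(1-1)$, which holds. The relation $gyg^{-1}=y+x$ becomes $y=y$, which also holds. Hence the assignments $x\mapsto 0$, $g\mapsto \pm1$, $y\mapsto \gamma$ define an algebra map $\mathfrak{U}\to\mK$, and its kernel is exactly $\langle x,g\mp1,y-\gamma\rangle$. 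So $T_{\pm}(\gamma)\cong\mK$ is one-dimensional, hence simple, and its annihilator is that kernel.

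\textbf{Step 2: they are pairwise non-isomorphic.} Isomorphic modules have equal annihilators. The ideals $\langle x,g\mp1,y-\gamma\rangle$ are distinct for distinct pairs $(\pm,\gamma)$, because $g$ acts by $\pm1$ and $y$ acts by $\gamma$ on the corresponding module. Since $\char\mK=0$, we have $1\neq -1$, so the two signs are really distinguished.

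\textbf{Step 3: every finite-dimensional simple module is one of them.} Let $M$ be a finite-dimensional simple $\mathfrak{U}$-module. Then $P:=\ann_{\mathfrak{U}}(M)$ is primitive, and $\mathfrak{U}/P$ embeds in $\operatorname{End}_{\mK}(M)$, so $P$ has finite codimension. I will go through the list in Theorem~\ref{A3Jun26}(3) and rule out every entry except the codimension-one ideals, using the structure results proved earlier.
\begin{itemize}
\item For $\alpha\neq\pm4\l$, the quotient $\mathfrak{U}^\alpha$ is a simple domain by Proposition~\ref{5Jun26}(1). It is infinite-dimensional, for instance because it contains the Ore extension $\mK[e^{\pm1}][h;\partial]$.
\item For $P=\langle\phi_\pm\rangle$ or $\langle\psi_\pm\rangle$, the quotient is $\mK[g^{\pm1}][y;\d]$ by Proposition~\ref{2Jun26}, which is clearly infinite-dimensional.
\end{itemize}
Hence $P=\langle x,g\mp1,y-\gamma\rangle$ for some sign and some $\gamma\in\mK$. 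Then $\mathfrak{U}/P\cong\mK$, so $M$ is a simple $\mK$-module. Therefore $M$ is one-dimensional and $M\cong T_{\pm}(\gamma)$.

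The only delicate point is Step 3. It depends on the fact that a finite-dimensional simple module has an annihilator of finite codimension, and on the earlier quotient descriptions that exclude the other primitive ideals. Both are straightforward given the previous results, so I do not expect a real obstacle.
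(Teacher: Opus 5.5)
Your proposal is correct and follows the paper's argument. Both show that each $T_{\pm}(\gamma)$ is a one-dimensional quotient. Both then use Theorem~\ref{A3Jun26}(3) to force the annihilator of any finite-dimensional simple module to be $\langle x, g\mp1, y-\gamma\rangle$, and separate the modules by their annihilators. Your version only makes explicit two steps the paper leaves implicit: that these ideals are proper, via the character $\mathfrak{U}\to\mK$, and that the primitive ideals $\langle\Omega-\alpha\rangle$, $\langle\phi_\pm\rangle$, $\langle\psi_\pm\rangle$ have infinite codimension.
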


\begin{proof}
	For each $\gamma\in\mK$, the quotient $T_{\pm}(\gamma)=\mathfrak{U}/\langle x, \,g\mp 1, \,y-\gamma \rangle$ is one-dimensional over $\mK$, and hence is simple. 
	Conversely,  let $M$ be a finite-dimensional simple $\mathfrak{U}$-module. Then $\ann_\mathfrak{U}(M)$ is a primitive ideal of finite codimension. By Theorem \ref{A3Jun26}(3), $\ann_\mathfrak{U}(M)=\langle x, g\mp 1, y-\gamma \rangle$ for some $\gamma \in \mK$. It follows that $M \cong T_{\pm}(\gamma)$. Therefore the modules $T_{\pm}(\gamma)$ exhaust all finite-dimensional
	simple $\mathfrak{U}$-modules. Distinct pairs $(\pm,\gamma)$ yield distinct
	annihilators, and hence non-isomorphic simple modules.
\end{proof}

The following proposition constructs a family of simple $\mathfrak{U}(\l)$-modules whose common annihilator is the primitive ideal $\langle \Omega-\alpha \rangle$. 
\begin{proposition}
	Let $\mathfrak{U}=\mathfrak{U}(\l)$ with $\l \neq 0$. For $\alpha,\beta \in \mK$, define
	\begin{equation*}
	M_{\l}(\alpha, \beta):=\mathfrak{U}/\mathfrak{U}(\Omega-\alpha,\, y+x-\beta g). 
	\end{equation*}
	If $\alpha \neq \pm 4\l$, then $M_{\l}(\alpha, \beta)$ is a simple $\mathfrak{U}$-module, and $\ann_\mathfrak{U}(M_{\l}(\alpha,\beta))=\langle \Omega- \alpha \rangle$. 
\end{proposition}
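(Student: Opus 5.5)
The plan is to pass to the central quotient $\mathfrak{U}^{\alpha}$ and use the presentation of Proposition \ref{5Jun26}(1). Since $\Omega-\alpha$ is central and lies in the left ideal we quotient by, $M_{\l}(\alpha,\beta)$ is a $\mathfrak{U}^{\alpha}$-module, namely
\[
M_{\l}(\alpha,\beta)\cong \mathfrak{U}^{\alpha}/\mathfrak{U}^{\alpha}(y+x-\beta g).
\]
Here we have $\mathfrak{U}^{\alpha}\cong A[h;\partial]$ with $A:=\mK[e^{\pm1}]_{\tilde g}$ commutative and $\partial(e)=e$. In $\mathfrak{U}^{\alpha}$ we have $y=\sqrt{2\l}\,hg$, and $g$ is a unit. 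Hence
\[
y+x-\beta g=\sqrt{2\l}\,(h-r)\,g,\qquad r:=\tfrac{1}{\sqrt{2\l}}\big(\beta-xg^{-1}\big)\in A,
\]
using that $x,g^{\pm1}\in A$ by the formulas in the proof of Proposition \ref{5Jun26}. Since $g$ is a unit, the left ideal is $\mathfrak{U}^{\alpha}(h-r)$. So the task reduces to showing that $A[h;\partial]/A[h;\partial](h-r)$ is simple.

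Every element of $A[h;\partial]$ can be written uniquely as $\sum_i a_i(h-r)^i$ with $a_i\in A$; this holds because $h-r$ is monic of degree one in $h$. Therefore $M:=M_{\l}(\alpha,\beta)\cong A$ as a left $A$-module, generated by the image of $1$, so in particular $M\neq0$. Under this identification $h$ acts by $a\mapsto \partial(a)+ra$.

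A nonzero submodule $N\subseteq M$ is then a nonzero ideal $I$ of $A$ that is stable under $\partial+r$. Since $rI\subseteq I$, the ideal $I$ is stable under $\partial=e\,\frac{d}{de}$. The ring $A$ is a localization of $\mK[e]$, hence a PID, so we can write $I=pA$ with $p\in\mK[e]$. We may choose $p$ to be a product of linear factors $e-c$ with $c\neq 0$ and $\tilde g(c)\neq0$. The condition $\partial(p)=e\,p'\in pA$ then means that $p$ divides $e\,p'$ up to units of $A$. If $c$ is a root of $p$ of multiplicity $m$, it is a root of $e\,p'$ of multiplicity only $m-1$, because $c\neq0$. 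Hence $p$ has no such roots, $p$ is constant, and $I=A$, so $M$ is simple.

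I expect the main care to be needed in this step, specifically in checking that the relevant roots avoid both $0$ and the zeros of $\tilde g$. The point $0$ is excluded because $e$ is a unit in $A$. The zeros of $\tilde g$ are automatically units in $A$ and so cause no trouble; note also that $\tilde g(0)=\frac{1}{8\l}(\alpha^2-16\l^2)\neq0$ since $\alpha\neq\pm4\l$.

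For the annihilator, $\ann_{\mathfrak{U}^{\alpha}}(M)$ is a two-sided ideal of $\mathfrak{U}^{\alpha}$. By Proposition \ref{5Jun26}(1), $\mathfrak{U}^{\alpha}$ is simple, and $M\neq0$, so this annihilator is $0$. Pulling back to $\mathfrak{U}$ gives $\ann_{\mathfrak{U}}(M)=\langle\Omega-\alpha\rangle$. This is consistent with Theorem \ref{A3Jun26}(3).
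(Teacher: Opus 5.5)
Your proof is correct in substance, but one step is misstated. From $y+x-\beta g=\sqrt{2\l}\,(h-r)\,g$ it does not follow that the left ideal $\mathfrak{U}^{\alpha}(y+x-\beta g)$ equals $\mathfrak{U}^{\alpha}(h-r)$. A unit can be absorbed into a left ideal only when it sits on the left, and here $g$ sits on the right. In fact $(h-r)g=g\,(h-\tfrac{\beta}{\sqrt{2\l}})$, so the left ideal is $\mathfrak{U}^{\alpha}(h-\tfrac{\beta}{\sqrt{2\l}})$. That ideal meets $A$ only in $0$, while $(h-r)-(h-\tfrac{\beta}{\sqrt{2\l}})=\tfrac{1}{\sqrt{2\l}}xg^{-1}$ is a nonzero element of $A$, so $h-r$ does not lie in it. The repair is one line. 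Right multiplication by $g$ is an automorphism of $\mathfrak{U}^{\alpha}$ as a left module, carrying $\mathfrak{U}^{\alpha}(h-r)$ onto $\mathfrak{U}^{\alpha}(h-r)g$. Hence $M_{\l}(\alpha,\beta)\cong\mathfrak{U}^{\alpha}/\mathfrak{U}^{\alpha}(h-r)$, and that isomorphism is all you use afterwards.

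With this fixed, your route differs from the paper's. The paper puts the unit on the left, $y+x-\beta g=\sqrt{2\l}\,g\,(h-\tfrac{\beta}{\sqrt{2\l}})$, so the module is $\mathfrak{U}^{\alpha}$ modulo $h$ minus a scalar. The vectors $e^{i}\bar 1$ are then $h$-eigenvectors with distinct eigenvalues $\tfrac{\beta}{\sqrt{2\l}}+i$. So a nonzero submodule containing $p(e)\bar 1$ contains the top monomial $e^{s}\bar 1$, hence $\bar 1$, because $e$ is a unit.

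Your $r$ is not a scalar, so that eigenvalue argument is unavailable. You replace it by showing that $A=\mK[e^{\pm1}]_{\tilde g}$ has no nonzero proper $\partial$-stable ideals, by comparing the multiplicities of the roots of $p$ and $ep'$. This is valid because clearing denominators only introduces powers of $e$ and $\tilde g$, which do not vanish at the roots of $p$. Your argument is a little longer but more general: it shows $A[h;\partial]/A[h;\partial](h-r)$ is simple for every $r\in A$, and it makes $M\cong A\neq0$ explicit.

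For the annihilator, you use simplicity of $\mathfrak{U}^{\alpha}$ directly, whereas the paper cites maximality of $\langle\Omega-\alpha\rangle$ from Theorem~\ref{A3Jun26}(2). These amount to the same fact.
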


\begin{proof}
	Recall that $\mathfrak{U}^{\alpha}=\mathfrak{U}/\mathfrak{U}(\Omega-\alpha)$ and $h=\frac{1}{\sqrt{2\l}} yg^{-1}$.
	Since
	$y+x-\beta g=\sqrt{2\lambda}\,g(h-\frac{\beta}{\sqrt{2\lambda}})$,
 and $g$ is invertible in $\mathfrak{U}^\alpha$, it follows that 
	\begin{equation*}
	M_{\l}(\alpha,\beta) \cong \mathfrak{U}^{\alpha}/\mathfrak{U}^{\alpha}(h-\tfrac{\beta}{\sqrt{2\l}}). 
	\end{equation*}
	Assume $\alpha \neq \pm 4\l$. By Proposition \ref{5Jun26}(1), $\mathfrak{U}^{\alpha} \cong R[h;\partial]$ where $R=\mK[e^{\pm 1}]_{\tilde{g}}$. Let $\bar{1}=1+\mathfrak{U}^{\alpha}(h-\tfrac{\beta}{\sqrt{2\l}})$. Then $M_{\l}(\alpha, \beta)=R \bar{1}$. Let $V$ be a nonzero submodule of $M_{\l}(\alpha, \beta)$. Choose a nonzero element $p(e)\tilde{g}^{-m} \bar{1}\in V$ where $0\neq p(e)\in \mK[e^{\pm 1}]$. Since $\tilde g$ is invertible in $R$, we have $p(e)\bar{1}\in V$. Write $p(e)=\sum_{i=r}^s \mu_i e^i$ with $\mu_s \neq 0$. Using $[h, e]=e$, we obtain $he^i \bar{1}=e^i (h+i) \bar{1}=(\tfrac{\beta}{\sqrt{2\l}}+i)e^i \bar{1}$. Thus the vectors $e^i\bar 1$ are $h$-eigenvectors with pairwise distinct eigenvalues. Since $p(e)\bar 1=\sum_{i=r}^{s}\mu_i e^i\bar 1\in V$, 
	and $V$ is stable under the action of $h$, each eigenspace component of $p(e)\bar 1$ belongs to $V$. In particular,
	$e^s\bar 1\in V$.
	Since $e$ is invertible in $R$, it follows that
	$\bar 1=e^{-s}(e^s\bar 1)\in V$.
	Therefore $V=M_{\lambda}(\alpha,\beta)$, and the module $M_{\lambda}(\alpha,\beta)$ is simple.
	
	Clearly,
	$\langle\Omega-\alpha\rangle\subseteq\ann_\mathfrak{U}(M_{\lambda}(\alpha,\beta))$.
	Since $\alpha\neq \pm4\lambda$, Theorem~\ref{A3Jun26}(2) shows that
	$\langle\Omega-\alpha\rangle$
	is a maximal ideal of $\mathfrak{U}$. Therefore
	$\ann_\mathfrak{U}(M_{\lambda}(\alpha,\beta))=\langle\Omega-\alpha\rangle$.
\end{proof}

Let $R$ be an algebra, $M$ a left $R$-module, and $\tau$ an automorphism of $R$. The \emph{twist} of $M$ by $\tau$ is the $R$-module ${}^{\tau}\!M$ defined as follows: as a vector space, ${}^{\tau}\!M=M$, and the $R$-action is given by $r \cdot m=\tau(r)m$, for all $r \in R$ and $m\in M$.  It is immediate that ${}^{\tau}\!M$ is simple if and only if $M$ is simple.   Moreover,  the annihilator of ${}^{\tau}\!M$ is equal to $\tau^{-1}(\ann_R (M))$. If $I$ is a left ideal of $R$, then there is a natural isomorphism ${}^{\tau}\!(R/I) \cong R/\tau^{-1}(I)$. 

The following automorphisms of $\mathfrak{U}(\lambda)$ will be used:
\begin{equation} \label{auto}  
\begin{aligned}
\tau: \qquad &g \mapsto -g, & &x \mapsto x, & &y \mapsto y;\\
\rho: \qquad &g \mapsto g, & &x \mapsto -x, & &y \mapsto -y. 
\end{aligned}
\end{equation} 
Clearly, $\tau^2= \rho^2= \mathrm{id}$. Moreover, $\tau(\phi_{+})=\psi_{-}$, $\tau(\phi_{-})=\psi_{+}$; and $\rho(\phi_{+})=-\phi_{-}$, $\rho(\phi_{-})=-\phi_{+}$. 

The following proposition provides explicit families of simple $\mathfrak{U}(\l)$-modules whose annihilators are precisely the primitive ideals $\langle \phi_{\pm} \rangle$ and $\langle \psi_{\pm} \rangle$. 
\begin{proposition}
	Let $\mathfrak{U}=\mathfrak{U}(\l)$ with $\l \neq 0$, and let $\mu \in \mK^*$ with $\mu \neq 1$. 
	\begin{enumerate}
		\item The $\mathfrak{U}$-modules
		\begin{equation*}
		M_{+}(\mu):=\mathfrak{U}/\mathfrak{U}(\phi_{+}, \, g-\mu), \qquad M_{-}(\mu):=\mathfrak{U}/\mathfrak{U}(\phi_{-}, \, g-\mu),  
		\end{equation*}
		 are simple, with $\ann_\mathfrak{U}(M_{+}(\mu))=\langle \phi_{+} \rangle$ and $\ann_\mathfrak{U}(M_{-}(\mu))=\langle \phi_{-} \rangle$. 
		 
		 \item The $\mathfrak{U}$-modules
		 \begin{equation*}
		 V_{+}(\mu):=\mathfrak{U}/\mathfrak{U}(\psi_{+}, \, g+\mu), \qquad V_{-}(\mu):=\mathfrak{U}/\mathfrak{U}(\psi_{-}, \, g+\mu),  
		 \end{equation*}
		 are simple, with $\ann_\mathfrak{U}(V_{+}(\mu))=\langle \psi_{+} \rangle$ and $\ann_\mathfrak{U}(V_{-}(\mu))=\langle \psi_{-} \rangle$. 
	\end{enumerate}
\end{proposition}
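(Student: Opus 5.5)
Everything reduces to the single module $M_{+}(\mu)$: the other three are obtained from it by twisting with the automorphisms $\tau,\rho$ of \eqref{auto}.

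\textbf{Step 1: pass to the quotient.} Since $\phi_{+}$ is normal, $\mathfrak{U}\phi_{+}=\langle\phi_{+}\rangle$, and hence
\begin{equation*}
M_{+}(\mu)\cong \bar{\mathfrak{U}}/\bar{\mathfrak{U}}(g-\mu), \qquad \bar{\mathfrak{U}}=\mathfrak{U}/\phi_{+}\mathfrak{U}\cong \mK[g^{\pm1}][y;\d_{+}],\quad \d_{+}(g)=\sqrt{2\l}\,g(g-1),
\end{equation*}
by Proposition \ref{2Jun26}(1). Since $\bar{\mathfrak{U}}$ is free as a left $\mK[g^{\pm1}]$-module on the powers of $y$, we can rewrite it as $\bigoplus_j y^j\mK[g^{\pm1}]$. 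Consequently $M_{+}(\mu)$ has basis $v_j:=y^j\bar 1$, $j\in\N$.

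\textbf{Step 2: compute the action.} From $[y,g]=\d_{+}(g)$ we get $gy=yg-\d_{+}(g)$. An induction then shows that $g v_j=\mu v_j+(\text{combination of } v_i,\ i<j)$. Let $\nu:=-\sqrt{2\l}\,\mu(\mu-1)$, which is the value of $-\d_{+}(g)$ at $g=\mu$. The coefficient of $v_{j-1}$ in $g v_j$ is $j\nu$, and $\nu\neq0$ exactly because $\mu\neq 0,1$. Hence $g-\mu$ strictly lowers the $y$-degree and is nonzero on each $v_j$ with $j\ge1$.

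\textbf{Step 3: simplicity.} Let $V$ be a nonzero submodule and take a nonzero $w\in V$ of minimal degree $n$. If $n\ge1$, then $(g-\mu)w\in V$ is nonzero of degree $n-1$, contradicting minimality. So $n=0$, which gives $\bar1\in V$ and therefore $V=M_{+}(\mu)$.

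\textbf{Step 4: annihilator.} Let $A$ be the annihilator. Clearly $\langle\phi_{+}\rangle\subseteq A$, and $A$ is primitive. By Theorem \ref{A3Jun26}(1),(3), the primitive ideals containing $\phi_{+}$ are $\langle\phi_{+}\rangle$ and $\langle x,g-1,y-\gamma\rangle$. The latter would force $g$ to act as $1$, which is impossible since $g\bar1=\mu\bar1$ with $\mu\neq1$. Hence $A=\langle\phi_{+}\rangle$.

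\textbf{Transporting to the other modules.} Using the twist formula ${}^{\sigma}(R/I)\cong R/\sigma^{-1}(I)$ and $\ann({}^{\sigma}M)=\sigma^{-1}(\ann M)$:
\begin{itemize}
\item With $\rho$: since $\rho(\phi_{+})=-\phi_{-}$ and $\rho(g-\mu)=g-\mu$, we get $M_{-}(\mu)\cong{}^{\rho}M_{+}(\mu)$, with annihilator $\langle\phi_{-}\rangle$.
\item With $\tau$: since $\tau(\phi_{+})=\psi_{-}$ and $\tau(g-\mu)=-(g+\mu)$, we get ${}^{\tau}M_{+}(\mu)\cong V_{-}(\mu)$, with annihilator $\langle\psi_{-}\rangle$.
\item Similarly, $V_{+}(\mu)\cong{}^{\tau}M_{-}(\mu)$.
\end{itemize}

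The only real obstacle is the leading-term computation in Step 2, namely verifying that the coefficient of $v_{j-1}$ in $(g-\mu)v_j$ is exactly $j\nu$. This rests on the identity $[g,y^j]=-j\,y^{j-1}\d_{+}(g)+(\text{lower})$ in a skew polynomial ring, evaluated at $g=\mu$ after moving the $g$'s to the right.
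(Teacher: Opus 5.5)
Your proposal is correct and follows essentially the same route as the paper: reduce to $\bar{\mathfrak{U}}/\bar{\mathfrak{U}}(g-\mu)$, apply $g-\mu$ to a nonzero element of minimal degree to lower its degree, pin down the annihilator using the primitive spectrum together with $g\bar 1=\mu\bar 1$, and transport the result via $\rho$ and $\tau$. The only cosmetic difference is that the paper works in the basis $h^i\bar 1$ with $h=\tfrac{1}{\sqrt{2\lambda}}\,yg^{-1}$, where the exact identity $(g-1)h^i=(h-1)^i(g-1)$ gives the leading coefficient directly, whereas you use $y^j\bar 1$ and the general leading-term formula for $g\,y^j$ in the Ore extension.
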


\begin{proof}
	(1) Let $\bar{\mathfrak{U}}=\mathfrak{U}/\langle \phi_{+} \rangle$. By Proposition \ref{2Jun26}, we have $\bar{\mathfrak{U}} \cong \mK[g^{\pm 1}][y;\d_{+}]$ where  $\d_{+}$ is the derivation determined by $\d_{+}(g)=\sqrt{2\l}\,g(g-1)$. Setting $h=\frac{1}{\sqrt{2\l}}yg^{-1}$ gives $[h,g]=g-1$, hence
	\begin{equation} \label{Jphi}  
	\bar{\mathfrak{U}} \cong \mK[g^{\pm 1}][h;\d], \qquad \text{where } \,\d(g)=g-1. 
	\end{equation}
    Since $\phi_{+}$ is normal in $\mathfrak{U}$, it follows that
    \begin{equation*}
    M_{+}(\mu) \cong \bar{\mathfrak{U}}/\bar{\mathfrak{U}}(g-\mu). 
    \end{equation*}
    Let $\bar{1}=1+\bar{\mathfrak{U}}(g-\mu)$. Then, by \eqref{Jphi}, $M_{+}(\mu)=\mK[h]\,\bar{1}$. 
    
    Let $W$ be a nonzero submodule of $M_{+}(\mu)$, and choose a nonzero element $w=\sum_{i=0}^n \alpha_i h^i \bar{1} \in W$ with $n$ minimal and $\alpha_n \neq 0$. Using $(g-1) h^i = (h-1)^i (g-1)$, we obtain  
    \begin{equation*}
    (g-1)w=(\mu-1)\sum_{i=0}^n \alpha_i (h-1)^i \bar{1} \in W. 
    \end{equation*}
    Hence 
    \begin{equation*}
    \begin{aligned}
    w':=(g-1)w-(\mu-1)w =(\mu-1) \sum_{i=0}^n \alpha_i ((h-1)^i-h^i) \bar{1} \in W
        \end{aligned}
    \end{equation*}
    Since $(h-1)^n-h^n=-nh^{\,n-1}+\cdots,$ the element $w'$ has $h$-degree $n-1$. As $\mu\neq 1$, the coefficient of $h^{n-1}\bar 1$ in $w'$ is $-(\mu-1)n\alpha_n\neq 0.$
    This contradicts the minimality of $n$ unless $n=0$. Thus $\bar{1}\in W$, so $W=M_{+}(\mu)$, and $M_{+}(\mu)$ is simple. It follows that $\ann_\mathfrak{U}(M_{+}(\mu))$ is a primitive ideal containing $\langle \phi_{+}\rangle$. Since $\mu\neq 1$, we have $g-1\notin \ann_\mathfrak{U}(M_{+}(\mu))$. By Theorem~\ref{A3Jun26}(3), $\ann_\mathfrak{U}(M_{+}(\mu))=\langle \phi_{+} \rangle$.     
        
    Since $M_{-}(\mu) \cong {}^{\rho}\!M_{+}(\mu)$, it follows that $M_{-}(\mu)$ is simple, and $$\ann_\mathfrak{U}(M_{-}(\mu))=\rho^{-1}(\ann_\mathfrak{U}(M_{+}(\mu))=\langle \phi_{-} \rangle.$$ 
    
    (2) Since $V_{+}(\mu) \cong {}^{\tau}\!M_{-}(\mu)$ and $V_{-}(\mu) \cong {}^{\tau}\!M_{+}(\mu)$, the result follows immediately from part (1). 
\end{proof}

\subsection{The automorphism group}
We now determine the automorphism group of the algebra $\mathfrak{U}=\mathfrak{U}(\l)$ with $\l \neq 0$. Consider the following subgroups of $\Aut(\mathfrak{U})$:
\begin{equation*}
\begin{aligned}
\mathbb{G}&=\{t_{\alpha, \beta}\mid \alpha^2=\beta^2=1\}, \quad &t_{\alpha,\beta}:& \quad g \mapsto \alpha g, \quad x \mapsto \beta x, \quad y \mapsto \beta y. \\
\mathbb{I}&=\{1, \iota \}, \quad &\iota: &\quad g \mapsto g^{-1}, \quad x \mapsto g^{-1}x, \quad y \mapsto -g^{-1}y, \\
\mathbb{A}&=\{ \tau_f \mid f\in \mK[g^{\pm 1},x] \}, \quad &\tau_f:& \quad g \mapsto g, \quad x \mapsto x, \quad y \mapsto y+f. 
\end{aligned}
\end{equation*}
The group $\mathbb{G}$ is isomorphic to $(\Z/2\Z)^2$.  The groups $\mathbb{I}$ and $\mathbb{A}$ coincide with those appearing in the case $\l=0$. 

\begin{theorem}
	Let $\mathfrak{U}=\mathfrak{U}(\l)$ with $\l \neq 0$. Then $\Aut(\mathfrak{U}) = \mathbb{G} \ltimes \mathbb{I} \ltimes \mathbb{A}$. 
\end{theorem}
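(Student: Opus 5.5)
First we verify that the listed maps are automorphisms of $\mathfrak{U}$. For $t_{\alpha,\beta}$ with $\alpha^2=\beta^2=1$, the relations $gxg^{-1}=x$ and $gyg^{-1}=y+x$ are clearly preserved. The relation $[y,x]=-\tfrac12x^2+\l(1-g^2)$ is also preserved, because both sides scale by $\beta^2=1$ and $g^2\mapsto\alpha^2g^2=g^2$. For $\iota$ and $\tau_f$ we use the same checks as in the case $\l=0$, together with the term $\l(1-g^2)$.
\begin{itemize}
\item For $\tau_f$: $[y+f,x]=[y,x]$ since $f$ commutes with $x$.
\item For $\iota$: compute $[-g^{-1}y,g^{-1}x]$ using $[y,g^{-1}]=xg^{-1}$. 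One obtains $g^{-2}\bigl(-\tfrac12x^2+\l(1-g^2)\bigr)$ plus correction terms, and these should equal $-\tfrac12 g^{-2}x^2+\l(1-g^{-2})$.
\end{itemize}
We then note the normalisation relations: $\mathbb{A}$ is normal, and $\mathbb{G}$ and $\mathbb{I}$ normalise one another modulo $\mathbb{A}$. This gives the semidirect product structure and the inclusion $\supseteq$.

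For the reverse inclusion, let $\s\in\Aut(\mathfrak{U})$. The units of $\mathfrak{U}$ are $\mK^*g^i$; this follows from the Ore extension structure over the Laurent ring $\mK[g^{\pm1},x]$, whose units are $\mK^*g^i$. Hence $\s(g)=\alpha g^{\pm1}$, and after composing with $\iota$ we may assume $\s(g)=\alpha g$.

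Next we pin down $\s(x)$ using the centre. Since $Z(\mathfrak{U})=\mK[\Omega]$, we have $\s(\Omega)=\mu\Omega+\nu$. Moreover $\s$ permutes the set of primes $\{\langle\phi_\pm\rangle,\langle\psi_\pm\rangle\}$, which are the height-one primes that are not maximal by Theorem \ref{A3Jun26}. Consequently $\s$ permutes the two non-generic central values $\pm4\l$, so $\mu(\pm4\l)+\nu\in\{\pm4\l\}$. This forces $\nu=0$ and $\mu=\pm1$.

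Applying $\s$ to the relation $gyg^{-1}=y+x$ gives $\s(x)=\alpha g\s(y)g^{-1}-\s(y)\cdot$ (suitably). Concretely, $g\s(y)g^{-1}=\s(y)+\s(x)$, and the left side is the conjugate of $\s(y)$ by $g$ (the scalar $\alpha$ cancels). Writing $\s(y)=\sum r_iy^i$ with $r_i\in\mK[g^{\pm1},x]$, the argument of Lemma \ref{a10Jun26} shows that $\s(x)$ has $y$-degree $n-1$ with leading coefficient $nr_nx$. Since $\s(x)$ commutes with $\s(g)=\alpha g$, Lemma \ref{a10Jun26} gives $\s(x)\in\mK[g^{\pm1},x]$. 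Therefore $n=1$ and $\s(x)=r_1x$. Because $\s$ is surjective and $\mathfrak{U}$ is generated over $\mK[g^{\pm1},x]$ by $y$, the coefficient $r_1$ must be a unit; granting that, $r_1=\beta g^i$.

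We then compare $\s(\Omega)=\alpha^{-1}\beta^2g^{2i-1}x^2-2\l(\alpha^{-1}g^{-1}+\alpha g)$ with $\mu\Omega=\mu x^2g^{-1}-2\l\mu(g^{-1}+g)$, using the basis $\{g^jx^k\}$. This yields $i=0$, $\alpha^{-1}=\mu=\alpha$, and $\beta^2=\alpha\mu=1$. Hence $\alpha^2=\beta^2=1$ and $\mu=\alpha$. Then $\s(y)-\beta y$ commutes with $g$: indeed $g\s(y)g^{-1}=\s(y)+\beta x$ and $g\beta yg^{-1}=\beta y+\beta x$. By Lemma \ref{a10Jun26}, $\s(y)=\beta y+f$, and so $\s=\tau_{\beta^{-1}f}\,t_{\alpha,\beta}$.

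The main obstacle is the unit argument for $r_1$ in the step determining $\s(x)$. We could avoid it by following the $\l=0$ proof instead. In that version, $\s$ maps $\langle\phi_+\rangle$ to some $\langle\phi_\pm\rangle$ or $\langle\psi_\pm\rangle$, and Lemma \ref{a11Jun26} then gives $\s(\phi_+)=u\cdot(\text{one of these})$ with $u=cg^j$. Since $\phi_+$ has $x$-degree one, this determines $\s(x)$ up to the terms we handle. We would also need to verify the claimed units and the identity for $\iota$ carefully.
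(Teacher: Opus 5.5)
Your proof is correct in substance, but it determines $\s(x)$ by a different route from the paper.

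\textbf{Comparison of routes.} The paper uses the prime spectrum. By Theorem~\ref{A3Jun26}, $\s$ permutes the four non-maximal height-one primes $\langle\phi_\pm\rangle,\langle\psi_\pm\rangle$. Lemma~\ref{a11Jun26} then gives $\s(\phi_+)=\beta g^i\xi$ for some $\xi\in\{\phi_\pm,\psi_\pm\}$, hence $\s(x)=\beta g^ix+p(g)$. Comparing $\s(\Omega)$ with $\mu\Omega+\nu$ then eliminates $i$, $p$ and $\nu$.

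You instead feed the twisted relation $g\s(y)g^{-1}=\s(y)+\s(x)$ into Lemma~\ref{a10Jun26} and its leading-term argument. This gives $\s(y)=r_1y+r_0$ and $\s(x)=r_1x$ at once. Your route needs only the unit group, $Z(\mathfrak U)=\mK[\Omega]$ and the centralizer lemma, and it works unchanged for $\l=0$. The prime spectrum enters only through your $\pm4\l$ step, which is correct but redundant, since the final coefficient comparison already forces $\nu=0$. The trade-off is that your multiplier $r_1$ is a priori an arbitrary element of $\mK[g^{\pm1},x]$. The paper gets a unit multiplier for free from Lemma~\ref{a11Jun26}.

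\textbf{The step you grant.} That $r_1$ is a unit is exactly the step you leave as ``granted''. It needs an argument, though a short one suffices.

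First option: compare $\s(\Omega)=\alpha^{-1}r_1^2x^2g^{-1}-2\l(\alpha^{-1}g^{-1}+\alpha g)$ with $\mu x^2g^{-1}-2\l\mu(g^{-1}+g)+\nu$ in the $\mK[g^{\pm1}]$-basis $\{x^k\}$ of $\mK[g^{\pm1},x]$. The left side has $x$-degree $2+2\deg_x r_1$, while the right side has $x$-degree $2$. Hence $r_1\in\mK[g^{\pm1}]$ with $r_1^2=\alpha\mu\in\mK^*$, so $r_1=\beta\in\mK^*$. This also gives $i=0$ immediately.

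Second option: your surjectivity idea can be made precise. Put $A=\mK[g^{\pm1},x]$ and $Y=r_1y+r_0$. Then $\s(A)\subseteq A$ and $[Y,a]=r_1\d(a)\in A$ for $a\in A$, so $\s(\mathfrak U)\subseteq\sum_k A\,Y^k$. Moreover $aY^k$ has leading $y$-coefficient $ar_1^k$. Since $\s$ is surjective, $y=\sum_k a_kY^k$ with $a_k\in A$, which forces $y=a_0+a_1Y$ with $a_1r_1=1$.

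\textbf{Minor points.} Your check for $\iota$ does go through. The extra term $-g^{-2}x^2$ coming from $[y,g^{-1}]=xg^{-1}$ turns the result into exactly $-\tfrac12g^{-2}x^2+\l(1-g^{-2})$.

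However, the claim that $\mathbb G$ and $\mathbb I$ normalise each other modulo $\mathbb A$ is false in one direction. One has $\iota\,t_{\alpha,\beta}\,\iota=t_{\alpha,\alpha\beta}$. But $t_{\alpha,\beta}\,\iota\,t_{\alpha,\beta}^{-1}=\iota\,t_{1,\alpha}$, which for $\alpha=-1$ sends $x\mapsto-g^{-1}x$ and so lies outside $\mathbb I\,\mathbb A$. The correct structure is $\mathbb A\rtimes(\mathbb G\rtimes\mathbb I)$. This does not affect the set equality $\Aut(\mathfrak U)=\mathbb G\,\mathbb I\,\mathbb A$, which is what both you and the paper actually prove.
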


\begin{proof}
	It is straightforward to verify that $\mathbb{G} \ltimes \mathbb{I} \ltimes \mathbb{A} \subseteq \Aut(\mathfrak{U})$. We prove the reverse inclusion.
	
	Let $\s \in \Aut(\mathfrak{U})$. Since the group of units of $\mathfrak{U}$ is $U(\mathfrak{U})=\{\mK^*g^i \mid i\in \Z \}$, and automorphisms preserve units, we have $\s(g)=\alpha g$ or $\s(g)=\alpha g^{-1}$ for some $\alpha \in \mK^*$. Composing with $\iota$ if necessary, we may assume that $\s(g)=\alpha g$. 
	
	By Theorem \ref{A3Jun26}, the non-maximal height-one prime ideals of $\mathfrak{U}$ are
	\begin{equation*}
	\CO:=\{ \langle \phi_{+} \rangle, \,\, \langle \phi_{-} \rangle, \,\, \langle \psi_{+} \rangle, \,\, \langle \psi_{-} \rangle  \}. 
	\end{equation*} 
	Since automorphisms preserve this set, $\s$ permutes the elements of $\CO$. In particular,  $\s(\langle\phi_{+}\rangle)=\langle \xi \rangle$ for some $\xi \in \{ \phi_{+}, \phi_{-}, \psi_{+}, \psi_{-} \}$. 
	 Since $\phi_{\pm}$ and $\psi_{\pm}$ are normal elements,  Lemma \ref{a11Jun26} implies that $\s(\phi_{+})=u \xi$ for some unit $u\in \mathfrak{U}$. Thus $\s(\phi_{+})=\beta g^i \xi$ for some $\beta \in \mK^*$ and $i\in \Z$. Using the explicit expressions of $\phi_{\pm}$ and $\psi_{\pm}$ in \eqref{phipsi}, we deduce that
	 \begin{equation*}
	 \s(x)=\beta g^i x+p(g)
	 \end{equation*}
	 for some $p(g)\in \mK[g^{\pm 1}]$. Since automorphisms preserve the centre,
	 $\s(\Omega)=\mu \Omega+\nu$ for some $\mu \in \mK^*$ and $\nu \in \mK$. Using $\Omega=x^2g^{-1}-2\l(g^{-1}+g)$, we compute
	\begin{equation*}
	\begin{aligned}
	\s(\Omega) &=\s(x)^2 \s(g)^{-1}-2\l(\s(g)^{-1}+\s(g))\\
	&= \big(\beta g^i x+p(g)\big)^2 \alpha^{-1}g^{-1}-2\l(\alpha^{-1}g^{-1}+\alpha g)\\
	&=\alpha^{-1}\beta^2 g^{2i-1} x^2+2\alpha^{-1}\beta g^{i-1}p(g) x\\
	&\qquad +\alpha^{-1}g^{-1}p(g)^2-2\l(\alpha^{-1}g^{-1}+\alpha g).
	\end{aligned}
	\end{equation*}
	On the other hand,
	\begin{equation*}
	\mu \Omega +\nu = \mu g^{-1} x^2 -2\l \mu(g^{-1}+g)+\nu. 
	\end{equation*}
	Comparing coefficients of $x^2$, $x$, and the remaining terms
	gives
	\begin{equation*}
	 i=0, \quad p(g)=0, \quad\alpha^{-1}\beta^2=\mu, \quad \alpha^{-1}=\mu=\alpha, \quad \nu=0. 
	\end{equation*}
	Hence $\s(x)=\beta x$, and $\alpha^2=\beta^2=1$. 
	
	Replacing $\sigma$ by $t_{\alpha,\beta}^{-1}\sigma$, we may further assume
	that $\sigma(g)=g$ and $\sigma(x)=x$.
	Applying $\sigma$ to the relation $gyg^{-1}=y+x$ yields $g\sigma(y)g^{-1}=\sigma(y)+x.$ Subtracting the original relation gives
	\[
	g\bigl(\sigma(y)-y\bigr)g^{-1}
	=
	\sigma(y)-y.
	\]
	Thus $\sigma(y)-y\in C_\mathfrak{U}(g)$.
	By Lemma~\ref{a10Jun26}, $C_\mathfrak{U}(g)=\Bbbk[g^{\pm1},x],$
	hence $\sigma(y)=y+f$ for some $f\in\Bbbk[g^{\pm1},x]$.
	Therefore
	$\sigma\in \mathbb G\ltimes\mathbb I\ltimes\mathbb A$,
	and thus
	$\Aut(\mathfrak{U})\subseteq \mathbb G\ltimes\mathbb I\ltimes\mathbb A$. Combined with the opposite inclusion, the result follows.
\end{proof}

\subsection{The Dixmier--Moeglin equivalence}
A Noetherian $\mK$-algebra $R$ is said to satisfy the \emph{Dixmier--Moeglin equivalence} if,  for prime ideals of $R$, the following conditions are equivalent: locally closed $\Leftrightarrow$ primitive $\Leftrightarrow$ rational (see \cite[II.8]{Brown-Goodearl-LectQutumGp} for details). Recall that a prime ideal $P$ in a ring $R$ is \emph{locally closed} if the intersection of all prime ideals properly containing $P$ strictly contains $P$. A prime ideal $P$ is called \emph{rational} if the centre of $\Frac(R/P)$ is an algebraic extension of $\mK$. 

The following result provides further evidence for \cite[Conjecture 1.3]{Bell-Leugn}, which predicts that every affine Noetherian Hopf $\mathbb{C}$-algebra of finite Gelfand--Kirillov dimension satisfies the Dixmier--Moeglin equivalence.

\begin{corollary}
	For any $\l \in \mK$,  the algebra $\mathfrak{U}(\l)$ satisfies the Dixmier--Moeglin equivalence. 
\end{corollary}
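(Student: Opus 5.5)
We verify the three conditions directly for every prime ideal, using the explicit descriptions of $\Spec$ and $\Prim$ in Theorem~\ref{a26May26} (for $\l=0$) and Theorem~\ref{A3Jun26} (for $\l\neq0$). Since $\mathfrak{U}(\l)$ is affine Noetherian, a standard general fact (see \cite[II.7--II.8]{Brown-Goodearl-LectQutumGp}) already gives locally closed $\Rightarrow$ primitive $\Rightarrow$ rational. Indeed, $\mathfrak{U}(\l)$ is a Jacobson ring satisfying the Nullstellensatz by \cite[Lemma 9.1.2 and Corollary 9.1.8]{MR}. It therefore suffices to show rational $\Rightarrow$ locally closed. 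Equivalently, we show that every prime which is not locally closed is not rational. We also sanity-check that the locally closed primes are exactly the primitive ones listed.

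\textbf{Case $\l\neq0$.} From the inclusion diagram in Theorem~\ref{A3Jun26}(1) we read off the locally closed primes.
\begin{itemize}
\item Each maximal ideal is trivially locally closed.
\item Each $\langle\phi_\pm\rangle$ and $\langle\psi_\pm\rangle$ is locally closed, since every prime properly containing it contains $\langle x,g\mp1\rangle$.
\item $\langle 0\rangle$ is not locally closed, because $\bigcap_{\alpha}\langle\Omega-\alpha\rangle=0$. This holds since any nonzero ideal meets $\mK[\Omega]$ by Theorem~\ref{A3Jun26}(5), and a nonzero polynomial cannot vanish at infinitely many $\alpha$.
\item $\langle x,g\pm1\rangle$ is not locally closed, since the $\langle x,g\pm1,y-\gamma\rangle$ intersect to it (in $\mK[y]$ the ideals $(y-\gamma)$ intersect to $0$).
\end{itemize}
This matches Theorem~\ref{A3Jun26}(3) exactly. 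For non-rationality of the remaining two types: $\Frac(\mathfrak{U})$ contains $\mK(\Omega)$ in its centre, which is transcendental over $\mK$; in fact the centre equals $\mK(\Omega)$, since $\Frac(\mathfrak U)=\Frac(\mathfrak{U}_\CS)$ and $Z(\mathfrak{U}_\CS)=\mK(\Omega)$. Likewise $\mathfrak{U}/\langle x,g\pm1\rangle\cong\mK[y]$ has fraction field $\mK(y)$, which is not algebraic over $\mK$. Conversely, rationality of the primitive primes follows from the general implication. One could also verify it directly: the central quotients are simple with algebraic centre, $\Frac$ of a localized Weyl algebra has centre $\mK$, and the maximal quotients of finite codimension equal $\mK$.

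\textbf{Case $\l=0$.} The same reading applies to Theorem~\ref{a26May26}. The maximal ideals are locally closed. The prime $\langle 0\rangle$ is not locally closed, since $\bigcap_{\alpha\in\mK^*}\langle\Omega-\alpha\rangle=0$ by Theorem~\ref{a26May26}(4). It is not rational, because $\Omega$ is central and transcendental in $\Frac(J)$. The primes $\langle x,\gp\rangle$ with $\gp$ non-maximal in $\mK[g^{\pm1},y]$ are not locally closed, because a non-maximal prime of an affine commutative algebra is an intersection of infinitely many maximal ideals strictly above it. They are not rational either, since $\Frac(\mK[g^{\pm1},y]/\gp)$ has positive transcendence degree.

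The only step needing care is the non-rationality of $\langle 0\rangle$. Here I would argue simply that $\Omega\in Z(\Frac(\mathfrak U))$ is not algebraic over $\mK$, because $\mK[\Omega]$ is a polynomial ring inside a domain. This avoids computing the full centre of the division ring. Everything else is bookkeeping from the explicit spectra.
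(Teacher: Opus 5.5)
Your proposal is correct and follows essentially the same route as the paper: the Nullstellensatz gives locally closed $\Rightarrow$ primitive $\Rightarrow$ rational, and each non-primitive prime is then shown to be non-rational, either via the central transcendental element $\Omega$ (for $\langle 0\rangle$) or via a commutative quotient of positive transcendence degree. You are in fact slightly more complete than the paper, since you verify local closedness explicitly and treat $\langle 0\rangle$ in the case $\l\neq 0$, which the paper's proof leaves out of its list of non-primitive primes. Your aside that $Z(\Frac(\mathfrak U))=\mK(\Omega)$ does not follow merely from $Z(\mathfrak{U}_{\CS})=\mK(\Omega)$, because the centre of a ring does not in general determine the centre of its quotient division ring; but, as you say, the argument only needs $\Omega$ to be central and transcendental.
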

\begin{proof}
	Since $\mathfrak{U}(\l)$ is a Noetherian algebra satisfying the Nullstellensatz, \cite[Lemma II.7.15]{Brown-Goodearl-LectQutumGp} gives the implications: 
	locally closed  $\Rightarrow$ primitive $\Rightarrow$ rational.	
	By Theorem~\ref{a26May26} (for $\lambda=0$) and Theorem~\ref{A3Jun26} (for $\lambda\neq 0$), the primitive ideals of $\mathfrak{U}(\lambda)$ are exactly the locally closed primes of $\Spec(\mathfrak{U}(\lambda))$. Hence it remains to show that every non-primitive prime is not rational.
	
	If $\lambda=0$, the non-primitive primes are $\langle 0\rangle$, $\langle x\rangle$, and $\langle x,\mathfrak p\rangle$ with $\mathfrak{p} \in  \Spec( \mK[g^{\pm1},y])$ of height one. Since $\Omega$ is transcendental over $\mK$ and $\mK(\Omega)\subset Z(\Frac(\mathfrak{U}(0)))$, the zero ideal is not rational. For any non-maximal prime $P\supseteq \langle x\rangle$,  the quotient $\mathfrak{U}(0)/P$ is a commutative domain of positive transcendence degree over $\mK$, hence $Z(\Frac(\mathfrak{U}(0)/P))=\Frac(\mathfrak{U}(0)/P)$ is not algebraic over $\mK$, so $P$ is not rational.
	
	If $\lambda\neq 0$, the only non-primitive primes are $\langle x,g-1\rangle$ and $\langle x,g+1\rangle$. For either such prime ideal $P$, we have $\mathfrak{U}/P \cong \mK[y]$. Hence $Z(\Frac(\mathfrak{U}/P))=\mK(y)$, which is transcendental over $\mK$.

	Thus locally closed, primitive, and rational primes coincide, so $\mathfrak{U}(\lambda)$ satisfies the Dixmier--Moeglin equivalence.
\end{proof}

\

\

\noindent
\textbf{Data availability statement}  Data sharing is not applicable to this article as no datasets were generated or analysed during the current study.

\

\noindent
\textbf{Declarations} The author declares no conflict of interest.

\small{

\end{document}